\newtheorem{theorem}{Theorem}
\newtheorem*{theorem*}{Theorem}
\newtheorem{lemma}[theorem]{Lemma}
\newtheorem{conjecture}[theorem]{Conjecture}
\newtheorem{proposition}[theorem]{Proposition}
\newtheorem{claim}[theorem]{Claim}
\newtheorem{question}[theorem]{Question}
\theoremstyle{exercise}
\theoremstyle{definition}
\numberwithin{equation}{section}
\numberwithin{theorem}{section}
\DeclareMathOperator{\sgn}{sgn}
\DeclareMathOperator{\aut}{Aut}
\newcommand{\argmin}{\operatornamewithlimits{argmin}}
\renewcommand{\P}[1]{{\mathbb{P}}\left[{#1}\right]}
\newcommand{\CondP}[2]{{\mathbb{P}}\left[{#1}\middle\vert{#2}\right]}
\newcommand{\ind}[1]{\mathbbm{1}_{#1}}
\newcommand{\half}{{\textstyle \frac12}}
\newcommand{\fourth}{{\textstyle \frac14}}
\newcommand{\N}{\mathbb N} \newcommand{\R}{\mathbb R}
\newcommand{\eps}{\epsilon}
\newcommand{\op}{A}
\newcommand{\limop}{Z}
\newcommand{\Sest}{\hat{S}}
\newcommand{\bdd}{\mathcal{B}}
\newcommand{\moment}[2]{{\mbox{M}_{#1}}\left[{#2}\right]}
\newcommand{\dist}{\rho}
\newcommand{\cc}{C}
\newcommand{\sigmazero}{\mathcal{A}_0}
\newcommand{\sigmainf}{\mathcal{A}_\infty}
\newcommand{\neigh}[1]{N({#1})}
\begin{document}

\title[Majority Dynamics and the Retention of Information]{Majority
  Dynamics and the Retention of Information}

\author[Omer Tamuz]{Omer Tamuz}
\address[O.~Tamuz]{Weizmann Institute of Science, Faculty of Mathematics and Computer Science, Rehovot, Israel.}
\email[O.~Tamuz]{omer.tamuz@weizmann.ac.il}
\thanks{ Omer Tamuz is supported by ISF grant 1300/08, and is a
  recipient of the Google Europe Fellowship in Social Computing, and
  this research is supported in part by this Google Fellowship.}

\author[Ran J.\ Tessler]{Ran J.\ Tessler}
\address[R.J.~Tessler]{Hebrew University, Einstein Institute of Mathematics, Jerusalem, Israel.}
\email[R.J.~Tessler]{ran.tessler@mail.huji.ac.il}

\subjclass[2010]{Primary:. Secondary:}

\date{\today}

\keywords{Social networks, repeated majority.}

\begin{abstract}
  We consider a group of agents connected by a social network who
  participate in {\em majority dynamics}: each agent starts with an
  opinion in $\{-1,+1\}$ and repeatedly updates it to match the
  opinion of the majority of its neighbors.

  We assume that one of $\{-1,+1\}$ is the ``correct'' opinion $S$,
  and consider a setting in which the initial opinions are independent
  conditioned on $S$, and biased towards it. They hence contain enough
  information to reconstruct $S$ with high probability. We ask whether
  it is still possible to reconstruct $S$ from the agents' opinions
  after many rounds of updates.

  While this is not the case in general, we show that indeed, for a
  large family of bounded degree graphs, information on $S$ is
  retained by the process of majority dynamics.

  Our proof technique yields novel combinatorial results on majority
  dynamics on both finite and infinite graphs, with applications to
  zero temperature Ising models.
\end{abstract}

\maketitle
\tableofcontents

\section{Introduction}
Consider a group of people (agents) who each carry one of two possible
opinions regarding some issue. Each agent forms an initial opinion,
and then repeatedly updates it by observing the opinions of its
neighbors in a social network.

Many variants of this model have been studied in diverse settings and
disciplines such as Economics
(e.g.~\cite{ellison1993rules,bala1998learning}), Statistical Mechanics
(e.g.~\cite{howard2000zero, fontes2002stretched, de2003convergence}),
Computer Science (see a survey by Shah~\cite{shah2009gossip}), and
Mathematics (e.g.~\cite{kanoria2011majority,
  camia2002clusters}). These include variations on how the agents
acquire their initial opinions (e.g., deterministically, at random,
arbitrarily or through some other process), what they aim to achieve
in this process (e.g., rational agents in
Economics~\cite{gale2003bayesian, bala1998learning,
  mossel2012strategic}, message passing agents in Computer
Science~\cite{kempe2003gossip}), and how they go about updating their
opinions in order to achieve this.

A particularly well studied model is that of {\em majority dynamics},
in which agents update their opinions to match that of the majority of
their neighbors.  We choose a setting in which each agent's initial
opinion carries some independent information regarding the ``true''
opinion, and where it is possible to discover this truth with high
probability by aggregating the initial opinions.  The question we
tackle is the following: is information lost through the process of
majority dynamics? When can the ``truth'' be well estimated even after
people have exchanged opinions?

These questions were, to our knowledge, first considered in the
context of majority dynamics in Mossel, Neeman and
Tamuz~\cite{mossel2012majority}, who gave both positive and negative
examples. We extend their work in several directions, as described
below.

As part of our analysis we develop a combinatorial tool that is
instrumental in studying majority dynamics. Using it, we present a
number of purely combinatorial results on this process. These results
can be applied directly to what is known as the dynamics of the zero
temperature translation-invariant ferromagnetic Ising model, which, on
odd degree graphs, is the same as what we below call asynchronous
majority dynamics. In particular, Theorem~\ref{thm:opinion-changes}
gives, for every odd degree lattice of (say) polynomial growth, a
constant $C$ such that regardless of the starting configuration, with
probability one no site changes its state more than $C$ times.

\subsection{Definitions and a statement of the problem}
Let $V$ be a finite or countably infinite set of agents.  Let
$G=(V,E)$, the {\em social network}, be an undirected, connected,
locally finite graph. We denote the neighbors of $i \in V$ by
$\neigh{i} = \{j \,:\, (i,j) \in E\}$, and say that $G$ is
$d$-bounded degree when $|\neigh{i}| \leq d$ for all $i \in V$.

We denote by $\op^i_t \in \{-1,+1\}$ agent $i$'s {\em opinion} at
time $t$. After drawing some initial opinions $\{\op^i_0\}_{i \in
  V}$ from a distribution we describe below, the agents proceed to
update their opinions using {\em majority dynamics}.

We consider two version of majority dynamics, namely a synchronous and
an asynchronous one, and prove all our results for both. In the
synchronous model, $t$ takes values in the non-negative integers only,
and we set
\begin{align}
  \label{eq:maj-synch}
  \op^i_{t+1} = \sgn \sum_{j \in \neigh{i}}\op^j_t.
\end{align}
In the asynchronous model, we equip each agent with an independent,
unit rate Poisson clock, and let each agent update its opinions at the
times of its clock rings, using
\begin{align}
  \label{eq:maj-asynch}
  \op^i_t = \sgn \sum_{j \in \neigh{i}}\op^j_t.
\end{align}
We assume throughout that the degree of every vertex is odd, so that
there are no ``ties'' and $\op^i_t \in \{-1,+1\}$ for all
$t$. Alternatively, given a graph with even degrees, one could add $i$
or remove $i$ from $\neigh{i}$ to make $|\neigh{i}|$ odd.

In the asynchronous model, in order to ensure that the model is well
defined, we must rule out the possibility that there will occur an
infinite sequence of clock ticks $t_1 > t_2 > \cdots > 0$ such
that agent $i$ updates at time $t_i$ and $i \in \neigh{i+1}$. To
ensure that this occurs with probability zero, it is sufficient to
assume that degrees are bounded. We indeed make this assumption
throughout, and prove that it is sufficient in
Claim~\ref{clm:asynch-well-defined} below.

A classical result~\cite{mcculloch1943logical} is that for finite
graphs, in the synchronous model, each agent's opinion either
converges, or, from some time on, oscillates between $-1$ and $+1$
with period two; that is, each agent's opinion eventually has period
at most two. In the asynchronous model all opinions converge for
finite graphs.

The same can be shown to hold for bounded degree infinite graphs with
sufficiently slow growth. To state this result we shall need some
definitions. Denote graph distances by $\dist(\cdot,\cdot)$, let
$n_r(G,i) = |\{j\,:\,\dist(i,j)=r\}|$ be number of vertices at graph
distance $r$ from $i$ in $G$. Finally, denote
\begin{align*}
  \moment{d}{G,i} = \sum_{r=0}^\infty\left(\frac{d+1}{d-1}\right)^{-r}n_r(G,i);
\end{align*}
the usefulness of this definition will be demonstrated below.

Ginosar and Holzman~\cite{ginosar2000majority} prove the following
result, which is a strengthening of a somewhat weaker claim by
Moran~\cite{moran1995period}.
\begin{theorem}[Moran, Ginosar and Holzman\footnote{Note that Moran,
    as well as Ginosar and Holzman, prove their theorems for the
    synchronous model; we extend it to the asynchronous model.}]
  \label{thm:moran}
  Let $G$ be a $d$-bounded degree graph such that $\moment{d}{G,i} <
  \infty$ for some ($\leftrightarrow$ all) $i \in V$.  Then each
  agent's opinion eventually has period at most two in the synchronous
  model, and converges in the asynchronous model.
\end{theorem}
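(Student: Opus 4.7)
For both models the strategy is to attach to each vertex $v$ a bounded real-valued \emph{local energy} $S^v_t$ that is monotone under the dynamics and whose increments lower-bound the amount of ``activity'' at $v$; summability then forces $v$ to freeze (asynchronous) or to reach period at most two (synchronous). I will use the weights $w^v_i := \lambda^{\dist(v,i)}$ with $\lambda := (d-1)/(d+1)$, so that $\sum_i w^v_i = \moment{d}{G,v}$ is finite by hypothesis. As will be seen, this specific choice of $\lambda$ is forced by the asynchronous step.

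\emph{Synchronous model.} I would set
\[
    S^v_t \;:=\; \sum_i w^v_i\, \op^i_{t+1}\!\!\sum_{j \in \neigh{i}} \op^j_t.
\]
Then $|S^v_t| \leq d\,\moment{d}{G,v}$, and
\[
    S^v_t - S^v_{t-1} \;=\; \sum_i w^v_i\,(\op^i_{t+1} - \op^i_{t-1})\!\!\sum_{j \in \neigh{i}} \op^j_t \;\geq\; 0
\]
\emph{term by term}, since the majority rule selects $\op^i_{t+1}$ to maximize $\op^i \sum_{j \in \neigh{i}} \op^j_t$ over $\op^i \in \{\pm1\}$. Whenever $\op^v_{t+1} \neq \op^v_{t-1}$, the $i = v$ summand contributes at least $2 w^v_v \cdot |\sum_{j \in \neigh{v}} \op^j_t| \geq 2$, by oddness of the degree. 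Since $S^v_t$ is bounded, $\sum_t \ind{\op^v_{t+1} \neq \op^v_{t-1}}$ is finite, so $v$ has eventual period at most two.

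\emph{Asynchronous model.} After invoking Claim~\ref{clm:asynch-well-defined} for well-definedness, I would use the edge-based energy
\[
    \tilde S^v_t \;:=\; \sum_{\{i,j\} \in E}(w^v_i + w^v_j)\,\op^i_t \op^j_t,
\]
for which $|\tilde S^v_t| \leq d\,\moment{d}{G,v}$. Between Poisson rings $\tilde S^v_t$ is constant; at a ring of $u$ that causes a flip (so $\op^u_t = \sgn \sum_{j \in \neigh{u}} \op^j_t$ after the flip), only edges at $u$ move, giving $\Delta \tilde S^v = 2\op^u_t \sum_{j \in \neigh{u}}(w^v_u + w^v_j)\op^j_t$. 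Writing $\epsilon_j := \op^u_t \op^j_t \in \{\pm1\}$ and worst-casing (positive neighbors with the smallest admissible weight $\lambda w^v_u$, negative neighbors with the largest $\lambda^{-1} w^v_u$, and the thinnest possible majority $d_u^+ - d_u^- = 1$ at a full-degree vertex $d_u = d$), the inequality $\Delta \tilde S^v \geq 0$ reduces to $(1+\lambda)(d+1) \geq (1+\lambda^{-1})(d-1)$; both sides equal $2d$ when $\lambda = (d-1)/(d+1)$. When $u = v$ every neighbor lies at distance one, so $w^v_j = \lambda$ uniformly and the increment simplifies to $2(1+\lambda)|\sum_{j \in \neigh{v}} \op^j_t| \geq 2(1+\lambda) > 0$. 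Boundedness of $\tilde S^v_t$ therefore caps the number of flips at $v$, and $\op^v_t$ converges.

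\emph{Main obstacle.} The delicate step is the asynchronous monotonicity: because every update uses the \emph{unweighted} majority, arbitrary non-negative weights break the standard zero-temperature Ising energy argument, and the ``defect'' produced by weighting is compensated only in the tight worst case at $\lambda = (d-1)/(d+1)$. Pinning down this exact value, which is precisely the constant encoded in $\moment{d}{G,v}$, is the crux of the proof, and it is what allows a single moment hypothesis to cover both dynamics.
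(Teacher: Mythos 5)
Your asynchronous argument is correct and is essentially the paper's: the quantity $\tilde S^v_t=\sum_{\{i,j\}\in E}(w^v_i+w^v_j)\op^i_t\op^j_t$ is, up to an affine change, the paper's Lyapunov functional $L_t$ built from a symmetric, summable edge weighting $z(i,j)=w^v_i+w^v_j$ whose adjacent-edge ratios are at most $(d+1)/(d-1)$ (what the paper calls a $d$-legal weighting), and your worst-case computation at a ringing vertex is exactly the verification that the weighted and unweighted majorities agree (the paper's Eq.~\ref{eq:sign-z}). The synchronous half, however, has a genuine gap. Your functional $S^v_t=\sum_i w^v_i\,\op^i_{t+1}\sum_{j\in\neigh{i}}\op^j_t$ carries the weight on the \emph{center} $i$ of each star rather than symmetrically on edges, and the claimed identity
\begin{align*}
S^v_t-S^v_{t-1}=\sum_i w^v_i\bigl(\op^i_{t+1}-\op^i_{t-1}\bigr)\sum_{j\in\neigh{i}}\op^j_t
\end{align*}
is false: it requires $\sum_i w^v_i\,\op^i_t\sum_{j\in\neigh{i}}\op^j_{t-1}=\sum_i w^v_i\,\op^i_{t-1}\sum_{j\in\neigh{i}}\op^j_t$, and the difference of these two sums equals $\sum_{\{i,j\}\in E}(w^v_i-w^v_j)\bigl(\op^i_t\op^j_{t-1}-\op^i_{t-1}\op^j_t\bigr)$, which vanishes for constant weights (the classical Goles--Olivos case) but not for $w^v_i=\lambda^{\dist(v,i)}$. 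The failure is not merely one of justification: since $\op^i_{t+1}=\sgn\sum_{j\in\neigh{i}}\op^j_t$, your $S^v_t$ equals $\sum_i w^v_i\bigl|\sum_{j\in\neigh{i}}\op^j_t\bigr|$, and this genuinely oscillates. On the ladder $\Z\times\{0,1\}$ (3-regular, slow growth), the configuration equal to $+1$ everywhere except at $(0,1)$ and $(1,0)$ has period exactly two, and taking $v=(0,1)$ one computes that $S^v_t$ alternates between two values differing by $2(1-\lambda)^3\neq 0$.

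The repair is precisely the device you already use in the asynchronous case, and it is the paper's central idea: put the weights on edges symmetrically, say $a_{ij}=w^v_i+w^v_j$ for $j\in\neigh{i}$, and set $S^v_t=\sum_i\sum_{j\in\neigh{i}}a_{ij}\op^i_{t+1}\op^j_t$. Symmetry of $a$ restores the telescoping, but each term of the difference is now $(\op^i_{t+1}-\op^i_{t-1})\sum_{j}a_{ij}\op^j_t$ while $\op^i_{t+1}$ is the \emph{unweighted} majority, so one must separately check that $\sgn\sum_j a_{ij}\op^j_t$ agrees with $\sgn\sum_j\op^j_t$ whenever the former is nonzero; this is exactly where the adjacent-ratio threshold $(d+1)/(d-1)$ enters in the synchronous model as well (the paper makes the inequality strict via the auxiliary function $\eta$ so as to exclude weighted ties entirely). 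At $i=v$ the weights $a_{vj}$ are all equal, so the increment is at least $2(1+\lambda)$ whenever $\op^v_{t+1}\neq\op^v_{t-1}$, and boundedness of $S^v_t$ by a multiple of $\moment{d}{G,v}$ then finishes the proof as you intended.
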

Note that this is a combinatorial (rather than a probabilistic)
result, in the sense that it holds for every set of initial opinions
$\{\op^i_0\}_{i \in V}$, and, in the asynchronous model, for every
choice of clock ring times (in which there are no rings at the same
time). We prove this result using a novel combinatorial tool, which
yields additional insights into this process (see
Section~\ref{sec:maj-dyn-comb}). In particular, we bound the number of
times that an agent may change its opinion, and show that an agent
will never change its opinion if a large enough neighborhood around it
agrees with it.

In Section~\ref{sec:counterexample} we provide an example of a
$d$-regular graph for which $\moment{d}{G,i} = \infty$ for all $i \in
V$, and for which, in the synchronous model, each agent's opinion
eventually has period at most two for any initial set of
opinions. This shows that the sufficient condition of
Theorem~\ref{thm:moran} is not necessary. Indeed, the existence of a
simple, geometrical, necessary and sufficient condition for
convergence to period at most two is an interesting open question.

It should be noted that had we allowed even degrees with random tie
breaking, the dynamics would have changed dramatically. Tessler and
Louidor~\cite{louidortessler2010} discuss the asynchronous model for
trees with even degrees larger than two (for which it may be that
$\moment{d}{G,i} < \infty$). It is shown there that if the initial
opinions are chosen with i.i.d symmetric Bernoulli distribution, then
almost surely there exist some agents that change their opinions
infinitely many times.

We henceforth consider only {\em slow growth} graphs, i.e.\ graph for
which
\[
\moment{d}{G,i} < \infty,
\]
and for which, by Theorem~\ref{thm:moran}, we can define random
variables $\limop^i$ taking values in $\{-1,+1\}$ and given by
\begin{align*}
  \limop^i = \lim_{t \to \infty}\op^i_{2t},
\end{align*}
where the limit is taken over $t \in \N$.

We next describe how the agents acquire their initial opinions.  Let
$S \in \{-1,+1\}$ be the {\em state of the world} with $\P{S=-1} =
\P{S=1} = \half$. We think of $\op^i_t \in \{-1,+1\}$ as agent $i$'s
{\em opinion regarding $S$} at time $t$, and draw $\{\op^i_0\}_{i \in
  V}$ as follows: We fix some $\half < p < 1$, and let $\op^i_0=S$
with probability $p$ and $\op^i_0=-S$ with probability $1-p$, with the
events `$\op^i_0=S$' being independent of each other and of $S$. Note
that it follows that the random variables $\op^i_0$ are not
independent, but are independent (in fact, i.i.d) conditioned on $S$.

When $|V|$ is finite but large, then $S$ can be estimated with high
probability given $\{\op^i_0\}_{i \in V}$. When $|V|$ is infinite, $S$
can be estimated exactly given $\{\op^i_0\}_{i \in V}$. Formally, let
$|V| = n$, and let
\begin{align*}
  \Sest_0 = \sgn \sum_{i \in V}\op^i_0.
\end{align*}
Then by the Chernoff bound
\begin{align*}
  \P{\Sest_0 \neq S} \leq e^{-O(n)}.
\end{align*}
For the case that $|V|=\infty$, let
\begin{align*}
  \sigmazero = \sigma\left(\left\{\op^i_0\right\}_{i \in V}\right).
\end{align*}
Here we denote by $\sigma(X)$ the sigma-algebra generated by $X$, a
random variable\footnote{This is the smallest sigma-algebra for which
  $X$ is measurable.}.  Then it holds that
\begin{align*}
  \inf_{\Sest_0 \in \sigmazero}\P{\Sest_0 \neq S} = 0,
\end{align*}
where, by a slight abuse of notation, we say that $\Sest_0 \in
\sigmazero$ when $\Sest_0$ is $\sigmazero$-measurable.

The question that we tackle is the following: when is it the case that
$S$ can still be reconstructed from the limiting opinions
$\{\limop^i\}_{i \in V}$?  Formally, define $\sigmainf =
\sigma\left(\left\{\limop^i\right\}_{i \in V}\right)$,
and let the probability of error (in the reconstruction of $S$ using
$\{\limop^i\}_{i \in V}$) be given by
\begin{align*}
  \delta(G,p) =   \inf_{\Sest \in \sigmainf}\P{\Sest \neq S},
\end{align*}
where, to remind the reader, $p = \P{\op^i_0=S}$ is a parameter of our
measure $\mathbb{P}$.

We ask the question of whether $\delta(G,p)$ is equal to zero in the
case of an infinite graph, or is close to zero in the case of a large
finite graph.  Formally, let $\{G_n\}_{n \in \N}$ be a sequence of
finite graphs such that $\lim_n|V_n| = \infty$. For which sequences
and values of $p$ is it the case that $\lim_n\delta(G_n,p)=0$? And for
which infinite $G$ does it hold that $\delta(G,p)=0$?

Berger~\cite{berger2001dynamic} gives an example of a sequence of
finite graphs with $\lim_n|V_n| = \infty$ and such that in each graph
there exists a {\em dynamic monopoly} of size eighteen: a set $W
\subset V$ of eighteen vertices with the property that if
$\{\op^j_0\}_{j \in W}$ are all equal to some $s \in \{-1,+1\}$ then
$\limop^i=s$ for all $i \in V$ (in the synchronous model)\footnote{We
  slightly weaken his definition; he requires that $\op^i_t=s$ for all
  $i$ in some finite time $t$.}.  Since $\P{\op^j_0 = -S\quad\forall j
  \in W} = (1-p)^{18}$, it follows that $\delta(G_n,p) \geq
(1-p)^{18}$.

This example involves graphs with increasingly large degrees.  We
offer the following conjecture (see also~\cite{mossel2012majority}).
\begin{conjecture}
  Let $\{G_n\}_{n \in \N}$ be a sequence of $d$-bounded degree finite
  graphs such that $\lim_n|V_n| = \infty$. Then
  \begin{align*}
    \lim_{n \to \infty}\delta(G_n,p)=0
  \end{align*}
  for all $\half < p < 1$.
\end{conjecture}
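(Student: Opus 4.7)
The plan rests on the combinatorial sealing lemma promised in Section \ref{sec:maj-dyn-comb}: a radius $r = r(d)$, depending only on $d$, such that whenever $\op^j_0 = s$ for every $j$ with $\dist(i,j) \le r$, one has deterministically $\limop^i = s$. This converts a local monochromatic initial pattern into a guarantee on the limit opinion at the center, which is the only device we have for pulling information about $S$ through the dynamics.

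The first step is a greedy packing in each $G_n$. Because $G_n$ is $d$-bounded degree, the $2r$-ball around any vertex contains at most $1 + d + d^2 + \cdots + d^{2r}$ vertices, so a greedy selection yields a set $W_n \subset V_n$ of centers pairwise at graph distance greater than $2r$ with $|W_n| \geq |V_n|/(1 + d + \cdots + d^{2r})$; in particular $|W_n| \to \infty$. The radius-$r$ balls about $W_n$ are then pairwise disjoint, so for $E_i := \{\op^j_0 = S \text{ for all } j \text{ with } \dist(i,j)\le r\}$, the family $\{E_i\}_{i \in W_n}$ is conditionally independent given $S$, with $\P{E_i \mid S} \geq c(d,p) := p^{1 + d + \cdots + d^r} > 0$. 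On $E_i$ the sealing lemma delivers $\limop^i = S$.

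The natural estimator is $\Sest_n = \sgn \sum_{i \in W_n} \limop^i$. For each fixed $i$, couple the $p$-biased product measure against the symmetric $\pm 1$ product measure using uniform variables $U_j \in [0,1]$: given $S=1$, set $\op^j_0 = +1 \Leftrightarrow U_j \le p$ under the biased law and $\Leftrightarrow U_j \le 1/2$ under the symmetric law. Monotonicity of majority dynamics then gives $\limop^i_p \geq \limop^i_{\text{sym}}$ pointwise, and the symmetric version has mean zero by global sign symmetry, so $\E{\limop^i \mid S = 1} \geq 0$. To upgrade to strict positivity $\E{\limop^i \mid S = 1} \geq \mu(d,p) > 0$ one exhibits a coupling witness of positive probability in which the biased dynamics seals the $r$-ball of $i$ to $+1$ (hence $\limop^i = +1$ by the sealing lemma) while the coupled symmetric configuration drives $\limop^i$ to $-1$. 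The sealing lemma directly secures the $+1$ side; securing the $-1$ side for an arbitrary $G_n$ requires a graph-dependent stability argument on the portion of the coupling outside the ball.

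The principal obstacle is the concentration step. Conditional on $S$, the variables $\{\limop^i\}_{i \in W_n}$ are not independent, since each $\limop^i$ can depend on the entire initial configuration rather than on its $r$-ball alone, so a positive conditional mean at each vertex does not by itself force $\sum_i \limop^i$ to concentrate around $|W_n|\mu$. A tempting remedy is to replace $\limop^i$ by a proxy $Y_i$ that is measurable from $\limop$ restricted to a bounded neighborhood of radius $r'=r'(d,p)$ around $i$, so that $Y_i$ and $Y_j$ become independent once $\dist(i,j) > 2r'$ and a Chebyshev (or Chernoff) estimate closes the argument; but the sealing lemma controls $\limop^i$ only from one side, so building a bounded-range $\sigmainf$-measurable proxy with a positive conditional mean uniformly over \emph{all} $d$-bounded degree sequences $\{G_n\}$ appears to need genuinely new input. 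This uniformity gap is, in my view, the essence of why the statement is listed as a conjecture rather than a theorem.
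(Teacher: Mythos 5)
This statement is posed in the paper as an open conjecture; the paper proves it only in special cases (quasi-transitive slow-growth infinite graphs and their invariant random subgraphs in Theorem~\ref{thm:percolation-learning}, and uniformly bounded growth families for $p$ close to $1$ in Theorem~\ref{thm:bdd-retention}), so there is no proof of the full statement to compare yours against. The question is therefore only whether your argument settles the conjecture, and it does not: the gap comes earlier than the concentration issue you flag at the end. The ``sealing lemma'' on which everything rests --- a radius $r=r(d)$ depending only on $d$ such that a monochromatic ball of radius $r$ at time $0$ forces $\limop^i=s$ --- is not what the paper provides, and it is false for general $d$-bounded degree graphs. Theorem~\ref{thm:bunkers} requires $\moment{d}{G,i}<\infty$, and its radius $r_0$ is determined by the tail of the growth profile $n_r(G,i)$ of the specific graph; when $\moment{d}{G,i}=\infty$ no admissible $r_0$ exists. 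Concretely, on the $d$-regular tree with $d\geq 3$, start with $+1$ on the ball of radius $r$ around $i$ and $-1$ elsewhere: each vertex at distance $r$ has one neighbor pointing inward and $d-1\geq 2$ pointing outward, so it flips to $-1$ in one synchronous step, the $-1$ front then advances inward one level per step, and $i$ flips after about $r$ steps. No radius depending only on $d$ (indeed no finite radius at all) seals the center there, and the conjecture quantifies over all $d$-bounded degree sequences, including tree-like and expander-like ones. So your first step, and with it the lower bound $\P{E_i\mid S}\geq p^{1+d+\cdots+d^r}$ feeding the estimator, is unavailable exactly on the graphs that make the conjecture hard.

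Even on families where a sealing radius does exist (uniformly bounded growth, as in $\bdd(f,d)$), your plan runs into the same wall that forces the paper to restrict Theorem~\ref{thm:bdd-retention} to $p>p_0$: the sealed event has probability only $p^{|B_r(i)|}$, which exceeds $1/2$ only for $p$ near $1$, and Theorem~\ref{thm:bunkers} controls $\limop^i$ from one side only. Your proposed repair --- couple to the symmetric product measure to get $\E{\limop^i\mid S=1}\geq\mu(d,p)>0$ and then concentrate --- is incomplete in two places: the strict positivity $\mu>0$ is asserted via an unspecified ``graph-dependent stability argument'' for the $-1$ side of the coupling witness, and, as you yourself note, $\limop^i$ depends on the entire initial configuration, so the family $\{\limop^i\}_{i\in W_n}$ has no a priori correlation decay and a positive per-vertex mean does not yield concentration of the sum. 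You have correctly located one genuine obstruction (the absence of a bounded-range, two-sided proxy for $\limop^i$), but the proposal also relies on a uniform sealing radius that does not exist, so it cannot be salvaged along these lines without new input on both fronts.
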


Mossel, Neeman and Tamuz~\cite{mossel2012majority} show that if
$\{G_n\}$ is sequence of {\em transitive} graphs (that is, graphs in
which all nodes play the same role, see Section~\ref{sec:random}
below) then $\lim_n\delta(G_n,p) = 0$. In fact, they show that the
same holds under a weaker assumption, namely that each geometric
equivalence class is large (again, see Section~\ref{sec:random} for a
precise definition). They also show the same for good enough {\em
  expander} graphs.

For infinite graphs $G$, our question is whether $\delta(G,p) =
0$. While we provide some positive results below, we do not know the
answer to the following seemingly basic question:
\begin{question}
  Does there exist an infinite graph $G$ and $\half < p < 1$ such that
  $\delta(G,p) > 0$?
\end{question}
We prove the following claim\footnote{Although the proof of this claim
  is rather straightforward, we have not found it in the literature.}
(see~\cite{peleg1998size} for related work).
\begin{theorem}
  \label{thm:no-dynamic-monopoly}
  In the synchronous model, no infinite, locally finite graph has a
  dynamic monopoly of finite size.
\end{theorem}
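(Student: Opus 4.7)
My plan is to argue by contradiction: assume $W\subset V$ is a finite dynamic monopoly and start the dynamics from $\op^j_0=+1$ on $W$ and $\op^j_0=-1$ off $W$, so that the hypothesis forces $\limop^i=+1$ for every $i\in V$.

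First I would establish a finite-propagation lemma: by induction on $t$, the set $A_t=\{v:\op^v_t=+1\}$ is contained in $\{v:\dist(v,W)\leq t\}$. Indeed, if $\dist(v,W)>t$ then every neighbor of $v$ is at distance $>t-1$ from $W$ and hence, by induction, is $-1$ at time $t-1$, forcing $\op^v_t=-1$. Since $G$ is locally finite and $W$ is finite, each $A_t$ is therefore a finite set.

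The main step is a Goles--Olivos energy argument, adapted to this infinite-graph-with-finite-support setting. I would define
\[
M(t)=\bigl|\bigl\{(i,j): j\in\neigh{i},\ \op^i_t\neq\op^j_{t-1}\bigr\}\bigr|,
\]
an ordered-pair mismatch count between consecutive times. Every pair contributing to $M(t)$ has $\op^i_t=+1$ or $\op^j_{t-1}=+1$, so one endpoint lies in the finite set $A_t\cup A_{t-1}$, which together with local finiteness gives $M(t)<\infty$. A direct termwise computation, a finite-difference version of the usual Goles--Olivos argument, then yields
\[
M(t)-M(t+1)=\sum_{i\,:\,\op^i_{t+1}\neq\op^i_{t-1}}\Bigl|\sum_{j\in\neigh{i}}\op^j_t\Bigr|,
\]
and since degrees are odd the right-hand side is a sum of positive integers. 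Hence $M$ is non-increasing and stabilizes at some time $T^*$; from then on the identity forces $\op^i_{t+1}=\op^i_{t-1}$ for every $i$, i.e., the opinions have period at most two everywhere.

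To conclude, since $G$ is infinite and locally finite, the finite set $\{v:\dist(v,W)\leq T^*+1\}$ does not exhaust $V$, so I can pick $v\in V$ with $\dist(v,W)>T^*+1$. The finite-propagation lemma gives $\op^v_{T^*}=\op^v_{T^*+1}=-1$, and the eventual period-$2$ property forces $\op^v_t=-1$ for all $t\geq T^*$, whence $\limop^v=-1$, contradicting the dynamic monopoly assumption. The main obstacle is making the Goles--Olivos step rigorous in the infinite setting: the customary Lyapunov function $-\sum_{(i,j)}\op^i_t\op^j_{t-1}$ is itself not summable, so one has to work directly with the finite mismatch count $M(t)$ and carefully verify that the cancellations underlying the standard monotonicity proof still go through termwise.
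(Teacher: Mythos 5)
Your proposal is correct and follows essentially the same route as the paper: the mismatch count $M(t)$ is exactly the paper's Lyapunov functional $L_{t-1}$ with the constant ($d$-legal) edge weighting $z\equiv 1$, which is finite for the monopoly initial condition and decreases by a positive integer whenever some agent has $\op^i_{t+1}\neq\op^i_{t-1}$, so only finitely many flips can ever occur. Your explicit finite-propagation lemma and the period-two endgame are a slightly more detailed packaging of the paper's one-line conclusion, but the underlying argument is identical.
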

This result suggests that perhaps $\delta(G,p) = 0$ for every finite
degree, slow growth graph $G$.

\subsection{Results}
\subsubsection{Majority Dynamics}
\label{sec:maj-dyn-comb}
We begin by proving two combinatorial claims regarding majority
dynamics. These claims may be of independent interest, but are also
useful in proving our main results.

The first result is a quantitative version of Theorem~\ref{thm:moran}.
\begin{theorem}
  \label{thm:opinion-changes}
  Let $G$ be a $d$-bounded degree graph such that $\moment{d}{G,i} <
  \infty$ for some ($\leftrightarrow$ all) $i \in V$. Then
  \begin{enumerate}
  \item In the synchronous model, the number of times $t$ for which
    $\op^i_{t+1} \neq \op^i_{t-1}$ is at most $ \frac{d+1}{d-1} \cdot
    d \cdot \moment{d}{G,i}$.
  \item In the asynchronous model, the number of times in which $i$
    changes its opinion is at most $ \frac{d+1}{d-1} \cdot 2d \cdot
    \moment{d}{G,i}$.
  \end{enumerate}
\end{theorem}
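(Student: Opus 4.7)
The plan is to prove Theorem~\ref{thm:opinion-changes} by a weighted monovariant argument centered at $i$. Set $\alpha = \tfrac{d-1}{d+1}$ and assign each vertex $j$ the weight $w(j) = \alpha^{\dist(i,j)}$, so that $\sum_j w(j) = \moment{d}{G,i} < \infty$ by hypothesis. The choice of $\alpha$ is forced by a local balance: when a vertex $j$ of odd degree $d_j \le d$ flips, at most $(d_j-1)/2$ of its neighbors agree with it and at least $(d_j+1)/2$ disagree, and the ratio $(d-1)/(d+1)$ is precisely what allows a weighted disagreement count to stay non-increasing even when the agreeing neighbors all lie one step closer to $i$ and the disagreeing ones all lie one step farther.

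For the asynchronous model I would consider the edge-based potential
\[
\Phi_t = \sum_{\{j,k\} \in E} \omega(j,k)\, \ind{\op^j_t \neq \op^k_t},
\]
with $\omega(j,k)$ a symmetric combination of $w(j)$ and $w(k)$ chosen so that $\Phi_0 = O(d \cdot \moment{d}{G,i})$. When $j$ flips, only the $d_j$ indicators on edges incident to $j$ change, and $\Delta\Phi = \sum_{k \in A}\omega(j,k) - \sum_{k \in D}\omega(j,k)$, where $A$ and $D$ denote the agreeing and disagreeing neighbors of $j$ just before the flip. The key step is to show that with the right $\omega$ this forces $\Delta\Phi \le -c_d\, w(j)$ for an explicit constant $c_d > 0$. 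Since $w(i)=1$, summing over all flips bounds the number of flips of $i$ by $\Phi_0/c_d$, yielding the theorem's constant $\tfrac{d+1}{d-1}\cdot 2d$. For the synchronous model I would use the classical parallel-update monovariant obtained by replacing $\op^k_t$ with $\op^k_{t+1}$ inside the indicator; a parallel local analysis shows that each time $t$ with $\op^i_{t+1}\neq\op^i_{t-1}$ forces a decrease of at least $c_d$, yielding the factor $d$ in place of $2d$ since one synchronous step absorbs a full round of simultaneous updates.

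The main obstacle is establishing a \emph{strict} inequality $\Delta\Phi \le -c_d\,w(j)$ with the right constant, because $\alpha = (d-1)/(d+1)$ sits exactly at the boundary of the moment condition---indeed, this is the threshold that excludes the $d$-regular tree, for which $\moment{d}{G,i}$ diverges. A naive estimate yields $\Delta\Phi = 0$ in the extremal configuration where all agreeing neighbors of $j$ lie strictly closer to $i$ and all disagreeing ones strictly farther. I would resolve this by attributing each edge's weight to its farther endpoint---e.g.\ $\omega(j,k) = \min(w(j), w(k))$---and carefully accounting for the asymmetry between the $(d_j-1)/2$ possibly-close agreeing neighbors and the $(d_j+1)/2$ forced-distant disagreeing ones, in the spirit of the discharging argument of Ginosar and Holzman. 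Once the per-flip inequality is in place, both stated bounds follow by summation.
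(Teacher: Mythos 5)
Your skeleton is the paper's own: your $\Phi_t$ is, up to the precise choice of edge weights, exactly the Lyapunov functional $L_t=\fourth\sum_{(j,k)\in E}z(j,k)(\op^j_{t+\Delta}-\op^k_t)^2$ used there, with weights decaying like $\left(\frac{d-1}{d+1}\right)^{\dist(i,\cdot)}$, and your time-staggered synchronous variant is also the paper's. The gap is in your ``key step.'' The inequality $\Delta\Phi\le-c_d\,w(j)$ for \emph{every} flipping vertex $j$ is unattainable at the decay rate $\alpha=(d-1)/(d+1)$, and your proposed repair does not attain it: with $\omega(j,k)=\min(w(j),w(k))$, a flipping degree-$d$ vertex $j$ whose agreeing neighbors all lie at distance $\le\dist(i,j)$ and whose disagreeing neighbors all lie at distance $\dist(i,j)+1$ gives
\begin{align*}
  \Delta\Phi\;\le\;\frac{d-1}{2}\,w(j)-\frac{d+1}{2}\,\alpha\,w(j)\;=\;0,
\end{align*}
so the $(d\pm1)/2$ asymmetry is cancelled exactly by the weight ratio, and ``carefully accounting for the asymmetry'' yields equality, not a strict decrease. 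What saves the argument --- and what is missing from your write-up --- is that a strict per-flip decrease is needed \emph{only at the distinguished vertex $i$}: all edges incident to $i$ carry the same weight, so oddness of $|\neigh{i}|$ forces $\Delta\Phi\le-\alpha$ whenever $i$ flips, while for every other vertex the non-strict bound $\Delta\Phi\le0$ suffices; the number of flips of $i$ is then at most $\Phi_0/\alpha$. The paper handles the same difficulty slightly differently: it multiplies the weights by a bounded, strictly increasing correction $\eta(\dist(i,e))$ with values in $\left(\frac{d-1}{d+1},1\right)$, so that adjacent edge weights have ratio \emph{strictly} below $\frac{d+1}{d-1}$; it normalizes $z\equiv1$ on edges at $i$ so each flip of $i$ costs at least $1$; and it extracts the sharper synchronous constant ($d$ rather than $2d$) from the energy identity $\op^k_1=\argmin_{a}\sum_{j\in\neigh{k}}z(j,k)(\op^j_0-a)^2$ rather than from your heuristic that a synchronous step ``absorbs a full round.'' Until you either localize the strictness requirement to $i$ or perturb the weights as the paper does, your argument establishes only that $\Phi_t$ is non-increasing, which by itself gives no bound on the number of opinion changes.
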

We would like to thank the anonymous referee for pointing out to us
that this theorem can, for the synchronous model, be easily derived
from the work of Ginosar and Holzman.

The next result shows that if a sufficiently large neighborhood of $i$
starts with a certain opinion then $i$ will always have this opinion.
\begin{theorem}
  \label{thm:bunkers}
  Let $G$ be a $d$-bounded degree graph such that $\moment{d}{G,i} <
  \infty$ for some ($\leftrightarrow$ all) $i \in V$. Let $r_0$ be
  such that
  \begin{align*}
    \frac{d+1}{d-1} \cdot 2d \cdot
    \sum_{r>r_0}\left(\frac{d+1}{d-1}\right)^{-r}n_r(G,i) < 1.
  \end{align*}

  If $\op^j_T=\op^i_T$ for some $T$ and for all $j$ such that
  $\dist(i,j) \leq r_0+2$, then $\op^i_t=\op^i_T$ for all $t > T$.
\end{theorem}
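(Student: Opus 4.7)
My plan is to deduce this as a localized version of Theorem~\ref{thm:opinion-changes}. The goal is to show, under the hypothesis, that the number of opinion changes of $i$ at times $t > T$ is bounded by
\[
\frac{d+1}{d-1} \cdot 2d \cdot \sum_{r > r_0} \left(\frac{d+1}{d-1}\right)^{-r} n_r(G,i),
\]
which is strictly less than one by the choice of $r_0$. Since the number of future opinion changes is a nonnegative integer, it must therefore be zero, giving $\op^i_t = \op^i_T$ for all $t > T$.

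To obtain this localized bound, I would revisit the combinatorial tool that powers the proof of Theorem~\ref{thm:opinion-changes}. I expect it to take the form of a potential function in which each vertex $j$ carries weight $\left(\frac{d+1}{d-1}\right)^{-\dist(i,j)}$, and each opinion change of $j$ after time $T$ consumes a corresponding amount of that potential. In the unconditional setting one bounds the total by $\sum_j \left(\frac{d+1}{d-1}\right)^{-\dist(i,j)} = \moment{d}{G,i}$, which yields Theorem~\ref{thm:opinion-changes}. Under the bunker hypothesis, however, the vertices deep inside $B(i, r_0+2)$ contribute nothing to the time-$T$ potential, because they all agree with their neighbors inside the ball; only vertices at distance $> r_0$ remain, yielding precisely the tail sum above. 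The rest of the argument is then just the same counting step as in Theorem~\ref{thm:opinion-changes}, but applied to this smaller starting budget.

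The main obstacle will be pinning down the buffer of $r_0 + 2$. I expect the two extra layers arise because the potential function charges each flip of a vertex $j$ against edges to its neighbors, which live at distances $\dist(i,j) \pm 1$: one layer of buffer is needed so that disagreements across the boundary of $B(i, r_0)$ do not contribute spurious potential, and a second layer is needed to guarantee that the vertices doing the ``accounting'' at distance $\le r_0 + 1$ are themselves prevented from flipping during the propagation. Carefully verifying this both for the synchronous model, where adjacent vertices can update simultaneously, and for the asynchronous model, and keeping the constants exactly matched with those appearing in Theorem~\ref{thm:opinion-changes}, is the technical heart of the argument; everything else should be routine bookkeeping once the potential framework from the proof of Theorem~\ref{thm:opinion-changes} is in hand.
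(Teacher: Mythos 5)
Your plan is essentially the paper's proof. The paper takes the $d$-legal summable weighting $z(e)=a^{-\dist(i,e)}\eta(\dist(i,e))/\eta(0)$ centered at $i$ (so $z\equiv 1$ on edges incident to $i$, whence $J^i_t\geq 1$ whenever $J^i_t\neq 0$), observes that under the hypothesis every edge $e$ with $\dist(i,e)\leq r_0$ contributes zero to $L_T$ --- because agreement on the ball of radius $r_0+2$ at time $T$ forces agreement on the ball of radius $r_0+1$ at time $T+\Delta$, which is exactly the mechanism behind the $+2$ buffer you describe --- bounds $L_T\leq 2\sum_{e\,:\,\dist(i,e)>r_0}z(e)<1$ by precisely your tail sum, and concludes from monotonicity and nonnegativity of $L_t$ that $J^i_t=0$ for all $t>T$.

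The one gap is your final step in the synchronous model. What the Lyapunov functional controls is $J^i_t$, which vanishes iff $\op^i_{t+\Delta}=\op^i_{t-\Delta}$; synchronously this compares opinions two time steps apart, so ``zero future changes'' in this sense only yields that $i$'s opinion has period at most two after time $T$ --- it does not exclude a persistent period-two oscillation, and hence does not by itself give $\op^i_t=\op^i_T$. (This is exactly why Theorem~\ref{thm:opinion-changes}(1) is phrased as counting times with $\op^i_{t+1}\neq\op^i_{t-1}$.) The fix is immediate from the hypothesis: all of $i$'s neighbors agree with $i$ at time $T$, so $\op^i_{T+1}=\op^i_T$, which combined with $\op^i_{t+1}=\op^i_{t-1}$ for all $t>T$ gives constancy. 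In the asynchronous model $J^i_t=0$ genuinely means no update changes $i$'s opinion, so there your conclusion stands as written.
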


\subsubsection{Invariant random subgraphs}
\label{sec:random}
A {\em graph automorphism} of $G=(V,E)$ is a bijection $h : V \to V$
such that $(i,j) \in E \leftrightarrow (h(i), h(j)) \in E$. We denote
by $\aut(G)$ the automorphism group of $G$. Indeed, it is easy to
verify that the graph automorphisms of a given graph form a group
under composition.

Let $H$ be a subgroup of $\aut(G)$. We say that $H$ acts {\em
  transitively} on (the vertices of) $G$ when, for every $i,j \in V$
there exists an $h \in H$ such that $h(i) = j$. Equivalently, $H$ acts
transitively on $G$ when $V/H$, the set of $H$ orbits of $V$, is a
singleton. We say that $H$ acts {\em quasi-transitively} on $G$ when
$V/H$ is finite. Finally, we say that $G$ is (quasi-) transitive when
$\aut(G)$ acts on it (quasi-) transitively.

A subgraph of $G_0=(V_0,E_0)$ is a graph $G=(V,E)$ such that $V
\subseteq V_0$ and $E \subseteq E_0$ is a set of edges on $V$.  Let
$H$ be a subgroup of $\aut(G_0)$. A random $G_0$-subgraph $G$ (that is,
a random variable $G$ that takes values in the space of subgraphs of a
graph $G_0$) is said to have an $H$-{\em invariant} distribution if
for all $h \in H$ the law of $h(G)$ equals the law of $G$.

Note that $G$ could have nodes with even degrees even when $G_0$ has
odd degrees. Since we will want to apply majority dynamics to $G$, we
add or remove self-loops to $G$ in order to make all degrees odd. This
does not affect the fact that $G$ is $H$-invariant; the modified $G$
is $H$-invariant iff the unmodified $G$ was $H$-invariant. This also
does not increase any degree beyond what it was on $G_0$, since we do
not add a self-loop unless we remove another edge.

The following is our main result for this section.
\begin{theorem}
  \label{thm:percolation-learning}
  Let $G_0$ be a quasi-transitive infinite graph with maximal degree
  $d$ such that $\moment{d}{G_0,i} < \infty$ for some
  ($\leftrightarrow$ all) $i \in V_0$.

  Let $H \leq \aut(G_0)$ act quasi-transitively on $G_0$, and let $G$
  be an infinite connected random subgraph of $G_0$ with an
  $H$-invariant distribution. Then
  \begin{align*}
    \delta(G,p) = 0
  \end{align*}
  almost surely for any $\half < p < 1$.
\end{theorem}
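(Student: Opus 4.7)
My plan is to produce, for a.e.\ realization of $G$, a sequence of $\sigma(\{\limop^i\}_{i\in V})$-measurable estimators $\Sest_K$ with $\CondP{\Sest_K\neq S}{G}\to 0$, giving $\delta(G,p)=0$. The core tool is Theorem~\ref{thm:bunkers}: fix $r_0$ from the moment $\moment{d}{G_0,i}$, which controls $\moment{d}{G,i}$ because $G$-distances dominate $G_0$-distances, and set $C := \max_{i\in V_0} |\{j\in V_0 : \dist_{G_0}(i,j)\le r_0+2\}|$, finite by quasi-transitivity of $G_0$. The theorem says: if the initial opinions agree on the $G$-ball of radius $r_0+2$ around $i$, then $\limop^i$ is pinned to their common value; a fortiori the same conclusion holds whenever the initial opinions agree on the larger $G_0$-ball around $i$, which contains the $G$-ball.

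I next establish a uniform mean bias via a monotone coupling. Realize the initial opinions through i.i.d.\ uniforms $\{U^j\}$ as $\op^j_0 := \sgn(p-U^j)$ (our biased model) and $\tilde{\op}^j_0 := \sgn(\tfrac12-U^j)$ (an auxiliary symmetric model), so that $\op^j_0 \ge \tilde{\op}^j_0$ pointwise. Majority dynamics is monotone (verified by induction on updates in either the synchronous or the asynchronous model), so the limits satisfy $\limop^i \ge \tilde{\limop}^i$. On the event that $U^j\in[\tfrac12,p)$ for every $j$ with $\dist_{G_0}(i,j)\le r_0+2$, whose conditional-on-$G$ probability is at least $(p-\tfrac12)^C$, every $\op^j_0$ in the ball equals $+1$ and every $\tilde{\op}^j_0$ equals $-1$, so Theorem~\ref{thm:bunkers} forces $\limop^i=+1$ and $\tilde{\limop}^i=-1$. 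Since the symmetric model is sign-invariant, $\CondE{\tilde{\limop}^i}{G}=0$, and
\[
\CondE{\limop^i}{G,\,S=+1}\;\ge\;2\alpha,\qquad \alpha := (p-\tfrac12)^C>0,
\]
uniformly in $i\in V$ and for a.e.\ $G$; the symmetric bound holds on $\{S=-1\}$.

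To lift this into a consistent estimator I pick an infinite $H$-orbit $\mathcal O\subset V_0$ with $\P{j\in V}>0$ for $j\in\mathcal O$---such an orbit exists since $V$ is a.s.\ infinite, $V_0/H$ is finite, and $H$-invariance makes $\P{j\in V}$ constant on each orbit---and thin it greedily to a sequence $\{j_k\}\subset\mathcal O$ with pairwise disjoint $G_0$-balls of some large radius $T$. Approximating $\limop^{j_k}$ by the finite-time opinion $\op^{j_k}_T$, which (with high probability in the asynchronous case) depends only on the initial opinions and Poisson clock rings inside the $G_0$-ball of radius $T$ around $j_k$, makes the family $\{\op^{j_k}_T\}_k$ conditionally independent given $(G,S)$. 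Combining Kolmogorov's strong law of large numbers for independent bounded variables with a uniform bound on $\CondP{\op^{j_k}_T\neq\limop^{j_k}}{G,S}$ (derived from Theorem~\ref{thm:opinion-changes} together with $H$-invariance) gives $\tfrac1K\sum_{k\le K}\limop^{j_k} \ge \alpha$ a.s.\ on $\{S=+1\}$ and $\le -\alpha$ on $\{S=-1\}$. Setting $\Sest_K := \sgn\sum_{k\le K}\limop^{j_k}$, with the convention $\limop^{j_k}:=0$ when $j_k\notin V$, then yields $\CondP{\Sest_K\neq S}{G}\to 0$ a.s. The main obstacle I expect is this concentration step, since $\limop^{j_k}$ may in principle depend on arbitrarily distant initial opinions; turning the uniform mean bias into almost-sure convergence requires the finite-time truncation combined with the uniform opinion-change bound of Theorem~\ref{thm:opinion-changes}, plus essentially an ergodic-theoretic input along the orbit $\mathcal O$.
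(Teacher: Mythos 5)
Your replacement of the paper's ``personal estimates'' step is correct and genuinely different: instead of invoking the monotone--symmetric-function lemma of Mossel, Neeman and Tamuz to show $\P{\op^i_t=S}\geq p$ at every finite time, you use Theorem~\ref{thm:bunkers} together with a monotone coupling against the symmetric $p=\half$ model to obtain a uniform conditional bias $\CondE{\limop^i}{G,S=+1}\geq 2(p-\half)^C$ directly for the \emph{limiting} opinions. This part is sound, up to one small repair: the tail condition of Theorem~\ref{thm:bunkers} for the subgraph $G$ is not inherited term by term from $\moment{d}{G_0,i}$, because a vertex at $G_0$-distance at most $r_0$ from $i$ may sit at $G$-distance greater than $r_0$; splitting the tail sum according to whether $\dist_{G_0}(i,j)>r_0/2$ does yield a single $r_0$ valid for every subgraph and every $i$, so this is cosmetic.

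The genuine gap is in the concentration step, exactly where you flag uncertainty, and it is twofold. First, your separated set $\{j_k\}$ is a deterministic thinning of a fixed orbit of $G_0$, chosen before $V$ is revealed. For a general $H$-invariant random subgraph the events $\{j_k\in V\}$ need not be independent (e.g.\ $G$ could equal one of two fixed infinite subgraphs with probability $\half$ each), so $\P{j_k\in V}>0$ for all $k$ does not imply that infinitely many $j_k$ lie in $V$; on the event that only finitely many do, your Ces\`aro average (with the convention $\limop^{j_k}=0$ off $V$) tends to $0$ and the estimator fails. The paper's independent, $\aut(G_0)$-invariant random separated set $W_{t,\delta}$ exists precisely to guarantee $|W_{t,\delta}\cap V|=\infty$ almost surely. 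Second, you run the law of large numbers conditionally on $G$, which requires $\sup_k\CondP{\op^{j_k}_T\neq\limop^{j_k}}{G}\to 0$ for the \emph{fixed realization} $G$; the $H$-invariance only gives this after averaging over $G$ (two vertices in the same $G_0$-orbit can converge at arbitrarily different rates in a given realization), and Theorem~\ref{thm:opinion-changes} bounds how many times an opinion changes, not when the changes stop. Both problems disappear if you drop the conditioning on $G$: prove the unconditional bound $\P{\Sest_K\neq S}\leq\eps$ (Hoeffding conditionally on the graph, the separated set and the clocks, plus a union bound of $Kq(T)$ for the truncation error, with $T=T(K)$ chosen large), and then conclude $\E{\delta(G,p)}\leq\eps$ for every $\eps$, hence $\delta(G,p)=0$ almost surely. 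With those repairs your argument goes through as a legitimate variant of the paper's proof, with the bunkers-plus-coupling bias bound substituting for Claim~\ref{clm:monotone-learning}.
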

It follows from Theorem~\ref{thm:percolation-learning} that
$\delta(G,p)=0$ for any quasi-transitive, slow growth infinite graph
$G$ and any $\half < p < 1$. This is already a non-trivial result,
which is generalized in Theorem~\ref{thm:percolation-learning}.

\subsubsection{Families of uniformly bounded growth graphs}
Given $d \geq 3$ and a function $f : \N \to \N$, let $\bdd(f,d)$ be the
family of $d$-bounded degree graphs $G$ such that $n_r(G,i) \leq f(r)$
for all vertices $i$ in $G$ and $r \in \N$.

Reusing the notation $\moment{d}{\cdot}$, let
\begin{align*}
  \moment{d}{f} = \sum_{r=0}^\infty\left(\frac{d+1}{d-1}\right)^{-r} \cdot f(r).
\end{align*}
We say that $f : \N \to \N$ has {\em slow growth} if $\moment{d}{f} < \infty$.

\begin{theorem}
  \label{thm:bdd-retention}
  Fix an odd $d \geq 3$, and let $f : \N \to \N$ have slow growth.
  There exists a $\half < p_0 < 1$ such that for all $p_0 < p < 1$ and
  for all sequences of finite graphs $\{G_n\}_{n \in \N}$ in
  $\bdd(f,d)$ such that $\lim_n|V_n| = \infty$ it holds that
  \begin{align*}
    \lim_n\delta(G_n,p) = 0,
  \end{align*}
  and for all infinite graphs $G \in \bdd(f,d)$ it holds that
  \begin{align*}
    \delta(G,p) = 0.
  \end{align*}
\end{theorem}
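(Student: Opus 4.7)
The plan is to use Theorem~\ref{thm:bunkers} to exhibit a dense random set of vertices whose limit opinion is pinned to $S$ already at time $0$, and then to show that a simple sign-majority of the limit opinions recovers $S$ with vanishing error probability. Since $\moment{d}{f}<\infty$, I first choose $r_0$ so that
\[
  \frac{d+1}{d-1}\cdot 2d\cdot \sum_{r>r_0}\left(\frac{d+1}{d-1}\right)^{-r} f(r) < 1.
\]
The uniform bound $n_r(G,i)\leq f(r)$ then makes the hypothesis of Theorem~\ref{thm:bunkers} hold with this same $r_0$ for every $G\in\bdd(f,d)$ and every vertex $i$. Writing $B_i:=\{j:\dist(i,j)\leq r_0+2\}$, set $N_0:=\sum_{r=0}^{r_0+2}f(r)$ and $M:=\sum_{r=0}^{2(r_0+2)}f(r)$; these uniformly bound $|B_i|$ and the size of a radius-$2(r_0+2)$ ball around $i$.

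Next I define a bunker indicator $X_i\in\{-1,0,+1\}$: set $X_i=+1$ if $\op^j_0=+1$ for every $j\in B_i$, set $X_i=-1$ if $\op^j_0=-1$ for every $j\in B_i$, and $X_i=0$ otherwise. By Theorem~\ref{thm:bunkers} applied at $T=0$, the event $\{X_i\neq 0\}$ forces $\limop^i=X_i$. With $N_i:=|B_i|\leq N_0$, the conditional i.i.d.\ structure of the initial opinions yields
\[
  \CondE{X_i\cdot S}{S}=p^{N_i}-(1-p)^{N_i},\qquad \CondP{X_i=0}{S}=1-p^{N_i}-(1-p)^{N_i},
\]
so that $Z_i:=X_i S-\ind{X_i=0}$ satisfies $\CondE{Z_i}{S}=2p^{N_i}-1\geq 2p^{N_0}-1$. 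I then pick $p_0:=2^{-1/N_0}$, so that $\gamma:=2p^{N_0}-1>0$ whenever $p>p_0$.

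The estimator is $\Sest:=\sgn\sum_{i\in W}\limop^i$, with $W=V_n$ in the finite case and $W=\{j:\dist(j,v_0)\leq R\}$ for a fixed vertex $v_0$ and growing $R$ in the infinite case. Using only $|\limop^i|=1$ together with $\limop^i=X_i$ on $\{X_i\neq 0\}$, one gets the pointwise bound
\[
  \sum_{i\in W}\limop^i\cdot S \;\geq\; \sum_{i\in W}Z_i \;=:\; Z,
\]
so $\{\Sest\neq S\}\subseteq\{Z\leq 0\}$. Since $X_i$ is a function of $\{\op^j_0\}_{j\in B_i}$ alone, the $Z_i$ are conditionally independent given $S$ whenever $\dist(i,j)>2(r_0+2)$; this gives $\CondE{Z}{S}\geq|W|\gamma$ (deterministically) and $\Var{Z\mid S}\leq|W|M$. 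Chebyshev then yields $\P{\Sest\neq S}\leq M/(|W|\gamma^2)$, which vanishes as $|V_n|\to\infty$, settling the finite case. For infinite $G\in\bdd(f,d)$, connectedness forces $|W|\to\infty$ as $R\to\infty$, so the $\sigmainf$-measurable estimators $\Sest_R$ satisfy $\P{\Sest_R\neq S}\to 0$ and hence $\delta(G,p)=0$.

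The main obstacle is that $\limop^i$ is, a priori, a global function of the initial opinions (and, in the asynchronous model, of all of the Poisson clocks), which blocks any direct law-of-large-numbers argument on $\sum\limop^i$. Theorem~\ref{thm:bunkers} is precisely what localizes the problem: it provides an explicit bounded-radius event, depending only on nearby initial opinions, on which $\limop^i$ is pinned to a local function of the initial data. Uniform slow growth then makes the corresponding radius a single constant across the entire family $\bdd(f,d)$, and the choice of $p_0$ is dictated by the quantitative competition between the bunker ``signal'' $p^{N_0}$ and the worst-case ``noise'' $1-p^{N_0}$ from non-bunker vertices, the two equalizing exactly when $p^{N_0}=\tfrac12$.
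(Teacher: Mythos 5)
Your proof is correct and rests on the same key lemma as the paper's: Theorem~\ref{thm:bunkers} applied at $T=0$ with a radius $r_0$ chosen uniformly over $\bdd(f,d)$ (possible because $n_r(G,i)\le f(r)$ and $\moment{d}{f}<\infty$), together with the observation that taking $p$ close enough to $1$ makes the bunker event likely enough for a law of large numbers on the limit opinions. Where you diverge is in the concentration step. The paper thins the vertex set to a maximal family $I_G$ of vertices whose balls $B_0(G,i)$ are pairwise disjoint, so that the one-sided events $U_i=\{\op^j_0=S\text{ for all }j\in B_0(G,i)\}$ are genuinely independent, requires $\P{U_i}\ge\eta>\half$, and applies Chernoff to $\sgn\sum_{i\in I_n}\limop^i$; this costs an extra (easy) remark that $|I_{G_n}|\to\infty$ because the diameters do. You instead keep the full vertex set, introduce the signed bunker indicator $X_i$ and the worst-case compensation $Z_i=X_iS-\ind{X_i=0}\le\limop^i\cdot S$, and control the finite-range dependence of the $Z_i$ by a second-moment/Chebyshev bound. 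Your version produces the more natural estimator (the global majority over $V_n$, respectively over growing balls in the infinite case) and dispenses with the packing, at the price of a polynomial rather than exponential error rate, which is immaterial for the statement; both choices of $p_0$ come from essentially the same threshold $p^{|B_0(G,i)|}>\half$. The only cosmetic caveat is that $p_0=2^{-1/N_0}$ degenerates to $\half$ if $N_0=1$; take any strictly larger $p_0$ to meet the strict inequality $\half<p_0$ required by the statement.
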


\section{Acknowledgments}
We would like to thank Elchanan Mossel for useful discussions and
ideas, and the anonymous referee for additional important comments and
corrections.

\section{Proofs}
\subsection{The Lyapunov functional}
Following Goles and Olivos~\cite{goles1980periodic}, we analyze the
process of Majority dynamics using the technique of Lyapunov
functionals. In particular, we build and elaborate on the ideas of
Ginosar and Holzman~\cite{ginosar2000majority} to apply these
techniques to infinite graphs. We apply them to both the asynchronous
and the synchronous model, and use them to prove some additional,
general results for majority dynamics on graphs with uniformly bounded
growth.

We begin by introducing some non-standard notation which will allow us
to simultaneously treat the synchronous and the asynchronous
models. We let the symbol $\Delta$ stand for ``$1$'' for in
synchronous model and for ``$dt$'' in the asynchronous model. For the
asynchronous model, we denote
\begin{align*}
  \op^i_{t - \Delta} = \op^i_{t - dt} = \lim_{t' \to t^-}\op^i_{t'}
\end{align*}
and
\begin{align*}
  \op^i_{t + \Delta} = \op^i_{t + dt} = \lim_{t' \to
    t^+}\op^i_{t'} =\op^i_t,
\end{align*}
where the last equality follows from Eq.~\ref{eq:maj-asynch}. The
definitions of majority dynamics, Eqs.~\ref{eq:maj-synch}
and~\ref{eq:maj-asynch}, can now be written in one equation:
\begin{align*}
  \op^i_{t+\Delta} = \sgn \sum_{j \in \neigh{i}}\op^j_t.
\end{align*}

Let $G=(V,E)$ be a finite or infinite $d$-bounded degree graph. Let a
{\em $d$-legal edge weighting} $z : E \to (0,1]$ be such that for any two
adjacent edges $e_1=(i,j)$ and $e_2=(i,k)$ it holds that
\begin{align}
  \label{eq:z}
  \frac{z(e_1)}{z(e_2)} < 1+\frac{2}{d-1} = \frac{d+1}{d-1}.
\end{align}
Note that $z$ is a function on the undirected edge set $E$, and so
$z(i,j) = z(j,i)$.  It is easy to see that
\begin{align}
  \label{eq:sign-z}
  \sgn \sum_{j \in \neigh{i}}\op^j_t = \sgn \sum_{j \in \neigh{i}}z(i,j)\op^j_t.
\end{align}
Indeed, assume $i$ has $k$ neighbors, and that the majority of their
opinions are, without loss of generality, $+1$. Then $i$ has at least
$\frac{k+1}{2}$ neighbors with opinion $+1$. Denote by $N_+$ this set
of neighbors. Similarly, $i$ has at most $\frac{k-1}{2}$ neighbors
with opinion $-1$, which we denote by $N_-$. Let $z_1$ be
the largest $z(i,j)$ for $j\in\neigh{i}$, and $z_2$ be the smallest
$z(i,j)$ in the same set. By assumption, and because $k \leq d$,
\[
\frac{z_1}{z_2}< \frac{d+1}{d-1} \leq \frac{k+1}{k-1}.
\]
Hence,
\begin{align*}
  \sum_{j \in \neigh{i}}z(i,j)\op^j_t&\geq z_2\sum_{j\in
    N_+}1-z_1\sum_{j\in N_-}1\\ &=
  z_2\left(|N_+|-\frac{z_1}{z_2}|N_-|\right)\\
  &\geq z_2\left(\frac{k+1}{2}-\frac{z_1}{z_2}\frac{k-1}{2}\right),
\end{align*}
the last expression is positive due to the second inequality in the
previous display, and Eq.~\ref{eq:sign-z} follows.  Thus, the
definitions of majority dynamics (Eqs.~\ref{eq:maj-synch}
and~\ref{eq:maj-asynch}) can equivalently be written as
\begin{align*}
  \op^i_{t+\Delta} = \sgn \sum_{j \in \neigh{i}}z(i,j)\op^j_t.
\end{align*}

Yet another equivalent definition is
\begin{align}
  \label{eq:argmin}
  \op^i_{t+\Delta} = \argmin_{a \in \{-1,+1\}}\sum_{j \in \neigh{i}}z(i,j)(\op^j_t-a)^2;
\end{align}
intuitively, each agent in each turn can be seen as trying to minimize
the ``energy'' $\sum_{j \in \neigh{i}}z(i,j)(\op^j_t-a)^2$. This
motivates the following definition:
\begin{align*}
  L_t = \fourth\sum_{(i,j) \in E}z(i,j)(\op^i_{t+\Delta}-\op^j_t)^2.
\end{align*}
Of course, $L_t$ has to be finite for this definition to be useful,
and we indeed give in Proposition~\ref{thm:summable} a necessary and
sufficient geometrical condition under which a $d$-legal summable $z$
exists. Note also that $L_t$ depends (implicitly) on the choice of
$z$.

It turns out that this definition of $L_t$ is a correct choice for a
``Lyapunov functional'', in the sense that $L_t$ is monotone
non-increasing.
\begin{proposition}
  \label{thm:lyapunov}
  $L_{T+t} \leq L_T$ for all $T,t \geq 0$.
\end{proposition}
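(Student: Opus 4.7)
My plan is to prove that $L$ weakly decreases at every update step, and then iterate. The crucial algebraic reduction is that since each $A^i_t \in \{-1,+1\}$, the identity $(A^i_{t+\Delta}-A^j_t)^2 = 2 - 2A^i_{t+\Delta}A^j_t$ gives
\begin{align*}
  L_t = \frac{1}{2}\sum_{(i,j)\in E} z(i,j) \;-\; \frac{1}{2}\sum_{(i,j)\in E} z(i,j)\,A^i_{t+\Delta}A^j_t,
\end{align*}
where, under the summability condition on $z$ (Proposition~\ref{thm:summable}), both series converge absolutely. Monotonicity of $L_t$ is therefore equivalent to showing that $M_t := \sum_{(i,j)\in E} z(i,j)\,A^i_{t+\Delta}A^j_t$ is non-decreasing.

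I would then compute $M_{t+\Delta}-M_t$. Interpreting the sum symmetrically in its two endpoints (so that each unordered edge contributes both orientations) and using $z(i,j)=z(j,i)$, the relabeling $i\leftrightarrow j$ in the sum $\sum z(i,j)A^i_{t+2\Delta}A^j_{t+\Delta}$ lets me rewrite the difference as
\begin{align*}
  M_{t+\Delta}-M_t = \sum_{j\in V}\bigl(A^j_{t+2\Delta}-A^j_t\bigr)\sum_{i\in N(j)} z(i,j)\,A^i_{t+\Delta}.
\end{align*}
By the majority update rule, $A^j_{t+2\Delta} = \sgn \sum_{i\in N(j)} z(i,j)A^i_{t+\Delta}$, so the inner sum equals $A^j_{t+2\Delta}\bigl|\sum_{i\in N(j)}z(i,j)A^i_{t+\Delta}\bigr|$. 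Since each $A^j \in \{-1,+1\}$, the product $(A^j_{t+2\Delta}-A^j_t)\,A^j_{t+2\Delta}$ equals $0$ when $A^j_t = A^j_{t+2\Delta}$ and equals $2$ otherwise. Hence every summand is non-negative, which gives the one-step bound $L_{t+\Delta}\leq L_t$; iterating over integer time yields the proposition in the synchronous model.

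For the asynchronous model the same identity holds, but it simplifies drastically: between clock ticks nothing changes, and at a tick of vertex $k$ only the $j=k$ summand survives, because $A^j_{t+2\Delta}=A^j_t$ for every $j\neq k$. The identical sign calculation applied at $k$ shows that the increment at each tick is non-positive. Telescoping over the finitely many ticks in $[T,T+t]$ (Claim~\ref{clm:asynch-well-defined}) gives $L_{T+t}\leq L_T$.

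The main obstacle is the bookkeeping around the relabeling $i\leftrightarrow j$: one has to be careful that the sum $M_t$ is read as symmetric in the two endpoints of an edge, which in turn requires absolute convergence of $\sum z(i,j)$ — this is precisely what the $d$-legal summable weighting $z$ buys us. Once that is in place, the proof is a short sign calculation driven by the observation that majority dynamics selects $A^j_{t+2\Delta}$ to be aligned with its weighted neighbor sum, which is exactly the alignment needed to make each summand in $M_{t+\Delta}-M_t$ non-negative.
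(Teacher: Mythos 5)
Your argument is correct and is essentially the paper's own proof in different clothing: writing $L_t$ as a constant minus $\tfrac12 M_t$ and computing $M_{t+\Delta}-M_t$ via the $i\leftrightarrow j$ relabeling reproduces exactly the identity $L_t-L_{t-\Delta}=-J_t$ of Claim~\ref{thm:J2}, and your sign computation for $(A^j_{t+2\Delta}-A^j_t)A^j_{t+2\Delta}$ is precisely Claim~\ref{thm:J1}. One small caveat: on an infinite graph there are a.s.\ infinitely many clock ticks in $[T,T+t]$, so Claim~\ref{clm:asynch-well-defined} (which bounds light cones, not the global number of ticks) does not give the ``finitely many ticks'' you invoke; but since every tick weakly decreases $L$, the monotonicity conclusion still stands.
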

Before proving this proposition, we will need the following
definitions and lemma.  Define $J^i_t$ by
\begin{align*}
  J^i_t = \half(\op^i_{t+\Delta}-\op^i_{t-\Delta})\sum_{j
    \in \neigh{i}}z(i,j)\op^j_t,
\end{align*}
and let
\begin{align*}
  J_t = \sum_{i \in V}J^i_t.
\end{align*}
\begin{claim}
  \label{thm:J1}
  $J^i_t \geq 0$, and $J^i_t = 0$ iff $\op^i_{t+\Delta}=\op^i_{t-\Delta}$.
\end{claim}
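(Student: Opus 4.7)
The plan is to analyze the factor $\tfrac{1}{2}(\op^i_{t+\Delta}-\op^i_{t-\Delta})$ by case on whether $i$'s opinion has just changed, and exploit the definitional identity $\op^i_{t+\Delta} = \sgn \sum_{j\in\neigh{i}} z(i,j)\op^j_t$.

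First I would observe that since $\op^i_{t\pm\Delta} \in \{-1,+1\}$, the difference $\op^i_{t+\Delta} - \op^i_{t-\Delta}$ takes values in $\{-2,0,2\}$. If this difference is $0$, then $J^i_t = 0$ immediately, which already gives one direction of the ``iff''. Otherwise $\op^i_{t-\Delta} = -\op^i_{t+\Delta}$, so $\op^i_{t+\Delta} - \op^i_{t-\Delta} = 2\op^i_{t+\Delta}$, and therefore
\begin{align*}
J^i_t = \op^i_{t+\Delta}\sum_{j\in\neigh{i}} z(i,j)\op^j_t.
\end{align*}
By the (weighted) majority rule, $\op^i_{t+\Delta} = \sgn \sum_{j\in\neigh{i}} z(i,j)\op^j_t$, so this product equals $\bigl|\sum_{j\in\neigh{i}} z(i,j)\op^j_t\bigr| \geq 0$, proving nonnegativity of $J^i_t$.

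For the strict positivity needed for the ``iff'', I would invoke the fact established in the derivation of Eq.~\ref{eq:sign-z}: since $z$ is $d$-legal and $|\neigh{i}|$ is odd, the weighted sum $\sum_{j\in\neigh{i}} z(i,j)\op^j_t$ has the same sign as the unweighted majority sum and in particular is nonzero (odd degree rules out ties among $\pm 1$ opinions, and the weighted sum cannot vanish because its sign is well-defined by that argument). Hence when $\op^i_{t+\Delta}\neq \op^i_{t-\Delta}$, we have $J^i_t = \bigl|\sum_{j\in\neigh{i}} z(i,j)\op^j_t\bigr| > 0$, which completes the equivalence.

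The only real subtlety, and the step I would be most careful about, lies in the asynchronous model. There $\op^i_{t+\Delta} = \op^i_{t+dt} = \op^i_t$ while $\op^i_{t-\Delta} = \op^i_{t-dt}$ is the left limit; away from the clock ticks of $i$ these agree, so $J^i_t = 0$ trivially and both sides of the ``iff'' hold. At a clock ring of $i$ the update rule $\op^i_t = \sgn \sum_{j\in\neigh{i}} z(i,j)\op^j_t$ applies verbatim and the synchronous argument above goes through unchanged. This unified treatment using the $\Delta$ convention introduced in the preceding discussion is exactly what makes the proof work in one stroke for both models.
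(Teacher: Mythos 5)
Your proof is correct and follows essentially the same route as the paper's: reduce to the case $\op^i_{t+\Delta}\neq\op^i_{t-\Delta}$, rewrite $J^i_t$ as $\op^i_{t+\Delta}\sum_{j\in\neigh{i}}z(i,j)\op^j_t$, and use the update rule together with the fact that the weighted sum is nonzero (odd degree plus the $d$-legality argument behind Eq.~\ref{eq:sign-z}) to get strict positivity. Your explicit handling of the asynchronous non-ring times is a welcome bit of extra care, but it is not a departure from the paper's argument.
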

\begin{proof}
  Since we assume that $|\neigh{i}|$ is odd, $\sum_{j
    \in \neigh{i}}z(i,j)\op^j_t$ is never zero. It follows that
  $J^i_t = 0$ iff $\op^i_{t+\Delta}=\op^i_{t-\Delta}$.

  To see that $J^i_t \geq 0$, note that when
  $\op^i_{t+\Delta}=\op^i_{t-\Delta}$ then $J^i_t=0$. Otherwise we
  have that $J^i_t = \op^i_{t+\Delta}\sum_{j \in \neigh{
    i}}z(i,j)\op^j_t$. But $\op^i_{t+\Delta} = \sgn \sum_{j
    \in \neigh{i}}z(i,j)\op^j_t$ and so $J^i_t$ is equal to the
  product of two equal sign multiplicands and is therefore positive.
\end{proof}
\begin{claim}
  \label{thm:J2}
  \begin{align*}
    L_t-L_{t-\Delta} = -J_t.
  \end{align*}
\end{claim}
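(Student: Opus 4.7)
The plan is to prove the identity by direct calculation: expand the squares defining $L_t$ and $L_{t-\Delta}$, subtract, and recognize the resulting expression as $-J_t$ via a symmetry swap of dummy indices.

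Interpreting $\sum_{(i,j)\in E}$ as a sum over directed edges, so that $L_t = \fourth \sum_i \sum_{j\in\neigh{i}} z(i,j)(\op^i_{t+\Delta}-\op^j_t)^2$, and using that $\op^i_t\in\{-1,+1\}$ implies the identity $(\op^i_{t+\Delta}-\op^j_t)^2 = 2 - 2\op^i_{t+\Delta}\op^j_t$, the constant terms cancel between $L_t$ and $L_{t-\Delta}$. After subtraction one is left with
$$L_t - L_{t-\Delta} = \half \sum_i \sum_{j \in \neigh{i}} z(i,j)\left[\op^i_t\,\op^j_{t-\Delta} - \op^i_{t+\Delta}\,\op^j_t\right].$$

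The second step is a symmetry swap: since $z(i,j)=z(j,i)$ and the set of directed edges is itself invariant under $i\leftrightarrow j$, one may rename the dummy variables in the first summand to obtain $\sum_i\sum_{j\in\neigh{i}} z(i,j)\,\op^i_{t-\Delta}\,\op^j_t$. Substituting back, the expression factors as
$$L_t - L_{t-\Delta} = -\half \sum_i (\op^i_{t+\Delta}-\op^i_{t-\Delta})\sum_{j \in \neigh{i}} z(i,j)\,\op^j_t = -\sum_i J^i_t = -J_t,$$
as desired.

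I expect the main obstacle to be not the algebra, which is essentially mechanical, but the justification of the index interchange in the infinite synchronous case, where one must argue absolute summability of the double sum. This should follow from the legality bound in Eq.~\ref{eq:z} together with the slow-growth assumption $\moment{d}{G,i}<\infty$, which will be formalized later in the proofs section (cf.\ the existence of a summable $d$-legal $z$). In the asynchronous case no such issue arises: only the single updating vertex $i_0$ at time $t$ has $\op^{i_0}_{t+\Delta}\neq\op^{i_0}_{t-\Delta}$, so the only surviving terms in both $L_t-L_{t-\Delta}$ and $J_t$ involve the finitely many edges incident to $i_0$, and one may verify directly that the calculation reduces to $-J^{i_0}_t$, with $J^i_t=0$ for $i\neq i_0$ by Claim~\ref{thm:J1}.
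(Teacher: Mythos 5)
Your proof is correct and follows essentially the same route as the paper's: expand the squared differences (the cross terms are all that survive since opinions are $\pm 1$), use $z(i,j)=z(j,i)$ and the symmetry of the directed-edge sum to swap the dummy indices in one summand, and recombine the result as $-\sum_i J^i_t$. Your additional remark on justifying the rearrangement via absolute summability of $z$ in the infinite synchronous case, and the reduction to the single updating vertex in the asynchronous case, is a point the paper passes over silently but is handled correctly here.
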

\begin{proof}
  \begin{align*}
    L_t-L_{t-\Delta} &= \fourth\sum_{(i,j) \in
      E}z(i,j)\left[(\op^i_{t+\Delta}-\op^j_t)^2-(\op^i_t-\op^j_{t-\Delta})^2\right]\\
    &= -\half\sum_{(i,j) \in
      E}z(i,j)\left[\op^i_{t+\Delta}\op^j_t-\op^i_t\op^j_{t-\Delta}\right]
  \end{align*}
  Since the edges are undirected, summing over $(i,j) \in E$ is the
  same as summing over $(j,i) \in E$. Therefore, and since
  $z(i,j)=z(j,i)$, we can exchange the roles of $i$ and $j$ in the
  last summand:
  \begin{align*}
    &= -\half\sum_{(i,j) \in
      E}z(i,j)\left[\op^i_{t+\Delta}\op^j_t-\op^j_t\op^i_{t-\Delta}\right]\\
    &= -\sum_{i \in V}\half(\op^i_{t+\Delta} - \op^i_{t-\Delta})\sum_{j
      \in \neigh{i}}z(i,j)\op^j_t\\
    &= -\sum_{i \in V}J^i_t\\
    &= -J_t.
  \end{align*}
\end{proof}
The proof of Proposition~\ref{thm:lyapunov} is now immediate.
\begin{proof}[Proof of Proposition~\ref{thm:lyapunov}]
  Since $L_t-L_{t-\Delta} = -J_t$ and since $J_t \geq 0$, it follows
  that $L_{T+t} \leq L_T$ for all $T,t \geq 0$.
\end{proof}

\subsection{No dynamic monopolies on infinite graphs}

As a simple application of Claims~\ref{thm:J1} and~\ref{thm:J2} we
show that infinite graphs cannot have dynamic monopolies.
\begin{proof}[Proof of Theorem~\ref{thm:no-dynamic-monopoly}]

  Consider the synchronous model.  Let $G$ be an infinite graph, and
  let $z : E \to \R^+$ be the $d$-legal edge weighting given by the
  constant function $z=1$. Then $z$ is not summable, but
  Claims~\ref{thm:J1} and~\ref{thm:J2} still hold, given that initial
  signals are chosen so that $L_t$ is finite.

  Let $W$ be a finite set of vertices. Let $\op^i_0=+1$ for all $i \in
  W$ and $\op^i_0=-1$ for all $i \notin W$. Then $L_0$ is finite. Note
  that with $z$ constant, $L_t$ and $J^i_t$ are integer, and therefore
  $L_t$ decreases by at least one whenever $\op^i_{t+1} \neq
  \op^i_{t-1}$. Since $L_t \geq 0$, at most a finite number of agents
  change their opinion to $+1$, and $W$ is not a dynamic monopoly.

\end{proof}

It may be possible to give a stronger, quantitative version of this
theorem, by using an edge weighting that increases with the distance
from $W$. 

\subsection{Summable edge weightings and slow growth}
We are now almost ready to prove Theorem~\ref{thm:moran}. Before that,
we will show that graphs with slow growth admit summable, $d$-legal edge
weightings.
\begin{proposition}
  \label{thm:summable}
  Let $G=(V,E)$ be a $d$-bounded degree graph. Then the following are
  equivalent.
  \begin{enumerate}
  \item $G$ admits a summable, $d$-legal edge weighting.
  \item $\moment{d}{G,i} < \infty$ for some ($\leftrightarrow$ all) $i
    \in V$.
  \end{enumerate}
\end{proposition}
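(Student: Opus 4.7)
The plan is to prove the two implications separately. For $(2) \Rightarrow (1)$, I would fix a root $i_0 \in V$ with $\moment{d}{G,i_0} < \infty$, set $\beta = (d-1)/(d+1)$, and define a radial weighting $z(i,j) = c \cdot f(\min(\dist(i_0,i), \dist(i_0,j)))$, where $f(r) = \beta^r g(r)$, $g : \N \to [1, d/(d-1))$ is strictly increasing (for instance $g(r) = 1 + (d-1)^{-1}(1 - 2^{-r})$), and $c > 0$ is chosen so that $z \leq 1$. For any two edges $e_1, e_2$ adjacent at a vertex $i$ at distance $r$ from $i_0$, their min-distances to $i_0$ lie in $\{r-1, r\}$, so $z(e_1)/z(e_2)$ is either $1$ or $f(r-1)/f(r) = (1/\beta) \cdot g(r-1)/g(r)$, which is strictly less than $(d+1)/(d-1)$ by strict monotonicity of $g$. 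Summability follows by counting: at most $d \cdot n_r(G,i_0)$ edges have min-distance $r$ to $i_0$, each carrying weight at most $c(d/(d-1)) \beta^r$, so $\sum_e z(e) \leq (cd^2/(d-1)) \moment{d}{G,i_0} < \infty$.

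For $(1) \Rightarrow (2)$, I would use a geodesic-path comparison. Fix any $i_0 \in V$ and let $z_{\min} = \min_{j \in \neigh{i_0}} z(i_0,j) > 0$. For each vertex $v$ at distance $r \geq 1$ from $i_0$, pick a geodesic $i_0 = v_0, v_1, \ldots, v_r = v$ and set $e_k = (v_{k-1}, v_k)$. Since $e_{k-1}$ and $e_k$ share the vertex $v_{k-1}$, the $d$-legality condition gives $z(e_k) > \beta\, z(e_{k-1})$, and iterating yields $z(e_r) > \beta^{r-1} z_{\min}$. The terminal edge $e_r$ has $v$ as its unique endpoint at distance $r$ from $i_0$, so distinct $v$'s produce distinct $e_r$'s. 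Summing therefore gives
\begin{align*}
\sum_e z(e) \geq z_{\min} \sum_{r \geq 1} \beta^{r-1} n_r(G, i_0),
\end{align*}
so finiteness of the left-hand side forces $\moment{d}{G,i_0} < \infty$ (the $r=0$ term being just $n_0 = 1$).

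The main obstacle is securing the strict inequality in the $d$-legality condition in direction $(2) \Rightarrow (1)$: the most natural radial weighting $z(e) = \beta^{\min\text{-dist}}$ attains the ratio $(d+1)/(d-1)$ with equality between adjacent edges at consecutive distance levels, and one cannot simply use a geometric weighting with larger base $\beta' > \beta$, because $\moment{d}{G,i_0} < \infty$ need not imply $\sum_r (\beta')^r n_r < \infty$ (for example when $\beta^r n_r \asymp 1/r^2$). The remedy is to multiply $\beta^r$ by a bounded, strictly increasing correction $g(r)$: strict monotonicity provides the needed slack in the ratios of adjacent edges, while boundedness preserves summability up to a multiplicative constant.
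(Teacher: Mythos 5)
Your proposal is correct and follows essentially the same route as the paper: for $(2)\Rightarrow(1)$ a radial weighting $a^{-r}$ times a bounded, strictly increasing correction (the paper's $\eta$ plays exactly the role of your $g$, supplying the strict inequality in the $d$-legality condition), and for $(1)\Rightarrow(2)$ a chain-of-adjacent-edges lower bound $z(e)\gtrsim a^{-r}$ on edges at distance $r$, summed against the vertex counts $n_r$. The only cosmetic gap is that in checking legality you should also note the reciprocal ratio $f(r)/f(r-1)$, but that is at most $\frac{d-1}{d+1}\cdot\frac{d}{d-1}<1$ and so trivially below $\frac{d+1}{d-1}$.
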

\begin{proof}
  Let $G$ be a $d$-bounded degree graph, and denote $a = (d+1)/(d-1)$.
  For a node $i$ and an edge $e=(j,k)$, denote by $\dist(i,e) =
  \min\{\dist(i,j),\dist(i,k)\}$ the distance of $e$ from $i$.

  We first show that (1) implies (2). Let $G$ admit a summable, $d$-legal
  edge weighting $z$. Let $e_0=(i,j)$ an edge. By the definition of
  $d$-legal edge weightings (Eq.~\ref{eq:z}), given an edge $e$ such that
  $\dist(i,e)=r$, we have that
  \begin{align*}
    z(e) \geq z(e_0)a^{-r-1}.
  \end{align*}
  Hence, by the fact that $z$ is summable, it follows that
  \begin{align*}
    \infty &> \sum_{e \in E}z(e) = \sum_{r=0}^\infty\sum_{\{e \in E\,:\,
      \dist(i,e)=r\}}z(e) \geq \frac{z(e_0)}{a}\sum_{r=0}^\infty a^{-r}|\{e \in
    E\,:\,\dist(i,e) = r\}|.
  \end{align*}
  Now, the number of edges at distance $r$ is greater than or equal to
  the number of nodes at distance $r+1$, $n_{r+1}(G,i)$. Hence
  \begin{align*}
    \infty &> \frac{z(e_0)}{a}\sum_{r=0}^\infty n_{r+1}(G,i)a^{-r} \\
    &= z(e_0)\sum_{r=1}^\infty n_r(G,i)a^{-r} \\
    &= z(e_0)\left(\moment{d}{G,i}-1\right),
  \end{align*} 
  where the last equality follows from the definition of
  $\moment{d}{G,i}$ and the fact that $n_0(G,i)=1$. Thus
  $\moment{d}{G,i}$ is finite.

  We now show that (2) implies (1). Let $\moment{d}{G,i} < \infty$ for
  some $i \in V$.  Let $\eta : \N \to (1/a,1)$ be any monotone
  increasing function, and let
  \begin{align*}
    z(e) = a^{-\dist(i,e)}\frac{\eta(\dist(i,e))}{\eta(0)}
  \end{align*}
  be an edge weighting. We will show that it is $d$-legal and summable. Indeed,
  \begin{align*}
    \sum_{e \in E}z(e) &= \sum_{r=0}^\infty
    a^{-r}\frac{\eta(r)}{\eta(0)}|\{e \in
    E\,:\,\dist(i,e) = r\}|\\
    &\leq \sum_{r=0}^\infty a^{-r}\frac{\eta(r)}{\eta(0)}\cdot d\cdot|\{j \in
    V\,:\,\dist(i,j) = r\}|,
  \end{align*}
  since the number of edges at distance $r$ is at most $d$ times the
  number of vertices at that distance. Since $n_r(G,i) = |\{j \in
  V\,:\,\dist(i,j) = r\}|$ and since $\eta(r) < 1$ then
  \begin{align*}
    &< \frac{d}{\eta(0)}\sum_{r=0}^\infty a^{-r}\cdot n_r(G,i)\\
    &= \frac{d}{\eta(0)} \cdot \moment{d}{G,i}\\
    &< \infty,
  \end{align*}
  and so $z$ is summable.

  To see that $z$ is $d$-legal, note that if $e_1$ and $e_2$ are adjacent
  then either
  \[
  \dist(i,e_1) = \dist(i,e_2)
  \]
   in which case
  $z(e_1)=z(e_2)$, or else, without loss of generality
  \[\dist(i,e_2) = \dist(i,e_1) + 1.\]
  In this case, denoting $r=\dist(i,e_1)$,
  \begin{align*}
    \frac{z(e_2)}{z(e_1)}  =
    \frac{a^{-r-1}\eta(r+1)}{a^{-r}\eta(r)}
    =a^{-1}\frac{\eta(r+1)}{\eta(r)} < 1,
  \end{align*}
  where the last inequality follows from the fact that $\eta(r+1) < 1$
  and $\eta(r) > 1/a$. Likewise,
  \begin{align*}
    \frac{z(e_1)}{z(e_2)}  =
    \frac{a^{-r}\eta(r)}{a^{-r-1}\eta(r+1)}
    =a\frac{\eta(r)}{\eta(r+1)} < a = \frac{d+1}{d-1},
  \end{align*}
  where the last inequality follows from the fact that $\eta$ is
  monotone increasing.
\end{proof}

\begin{proof}[Proof of Theorem~\ref{thm:moran}]
  Let $z$ be a $d$-legal summable edge weighting, as defined in the proof
  of Proposition~\ref{thm:summable}. Note that $L_t$ is non-negative, and
  also finite for all $t$:
  \begin{align*}
    L_t &= \fourth\sum_{(i,j) \in
      E}z(i,j)(\op^i_{t+\Delta}-\op^j_t)^2\\
    &\leq \sum_{(i,j) \in E}z(i,j).
  \end{align*}

  Since $L_t$ is finite, non-negative and non-increasing, it follows that
  \begin{align*}
    L = \lim_{t \to \infty}L_t
  \end{align*}
  always exists and is non-negative.

  Fix the initial opinions, the times of the clock rings (for the
  asynchronous model) and a vertex $j$. Note that $J^j_t$ is either
  zero or greater than some $\eps_j > 0$. Let $T$ be such that $L_T-L
  < \eps_j$. It then follows from Claims~\ref{thm:J1} and~\ref{thm:J2}
  that $J^j_t = 0$ for all $t > T$, since otherwise it would be the
  case that $J^j_t > \eps_j$ and $L_{t+\Delta} < L$. Now, by
  Claim~\ref{thm:J1} it follows from this that
  $\op^j_{t+\Delta}=\op^j_{t-\Delta}$ for all $t > T$, and so $j$'s
  opinion has period at most two in the synchronous case, and
  converges in the asynchronous case.
\end{proof}

\subsection{Combinatorial majority dynamics results}
Let $G=(V,E)$ be a $d$-bounded degree graph such that $\moment{d}{G,i}
< \infty$ for some $i \in V$.  As per the proof of
Proposition~\ref{thm:summable}, let $z$ be a summable, $d$-legal edge
weighting given by
\begin{align*}
  z(e) = a^{-\dist(i,e)}\frac{\eta(\dist(i,e))}{\eta(0)},
\end{align*}
where $a = (d+1)/(d-1)$, and $\eta : \N \to (1/a,1)$ is some monotone
increasing function. In fact, by the proof of
Proposition~\ref{thm:summable}, we have that
\begin{align*}
  \sum_{e \in E}z(e) \leq \frac{d}{\eta(0)} \cdot \moment{d}{G,i} \leq
  \frac{d+1}{d-1}\cdot d \cdot \moment{d}{G,i}.
\end{align*}

\begin{proof}[Proof of Theorem~\ref{thm:opinion-changes}]
  By Claim~\ref{thm:J1}, the number of times that
  $\op^i_{t+\Delta} \neq \op^i_{t-\Delta}$ is equal to the
  number of times that $J^i_t \neq 0$. Hence we will prove the claim
  by bounding the number of times that $J^i_t \neq 0$.

  Since $z(i,j)=1$ for all $j \in \neigh{i}$, $J^i_t \geq 1$ whenever
  $J^i_t \neq 0$. Since $L=\lim_{t \to \infty}L_t$ is non-negative, and
  since $L_t-L_{t-\Delta} = -J_t$ by Claim~\ref{thm:J2}, it follows
  that the number of times that $J^i_t \neq 0$ is at most $L_0$.

  Now, in the asynchronous case
  \begin{align*}
    L_0 &= \fourth\sum_{(k,j) \in
      E}z(k,j)(\op^k_0-\op^j_0)^2\\
    &\leq 2\sum_{e \in E}z(e)\\
    &\leq \frac{d+1}{d-1} \cdot 2d \cdot \moment{d}{G,i}.
  \end{align*}

  In the synchronous case this bound can be improved. By
  Eq.~\ref{eq:argmin}, we have that
  \begin{align*}
    \fourth \sum_{j \in N(k)}z(j,k)(\op^k_1-\op^j_0)^2 < \half \sum_{j \in
      N(k)}z(j,k).
  \end{align*}
  Hence
  \begin{align*}
    L_0 &= \fourth\sum_{(k,j) \in
      E}z(k,j)(\op^k_1-\op^j_0)^2\\
    &< \sum_{e \in E}z(e)\\
    &\leq \frac{d+1}{d-1} \cdot d \cdot \moment{d}{G,i}.
  \end{align*}
\end{proof}

\begin{proof}[Proof of Theorem~\ref{thm:bunkers}]
  Let $r_0$ be such that
  \begin{align*}
     \frac{d+1}{d-1} \cdot 2d \cdot \sum_{r > r_0}\left(\frac{d-1}{d+1}\right)^r \cdot n_r(G,i) < 1.
  \end{align*}
  Then it is easy to verify that
  \begin{align*}
    \sum_{e\in E}\ind{\dist(i,e) > r_0}z(e) < \half.
  \end{align*}

  If $\op^j_T=s$ for all $j$ within distance $r_0+2$ from $i$, then
  $\op^j_{T+\Delta}=s$ for all $j$ within distance $r_0+1$ from $i$, and
  $\op^k_{T+\Delta}=s$ for all $e=(j,k)$ with $\dist(i,e) \leq
  r_0$. Hence
  \begin{align*}
    L_T &= \fourth\sum_{(k,j) \in
      E}z(k,j)(\op^k_{T+\Delta}-\op^j_T)^2\\
    &\leq 2\sum_{e\in E}\ind{\dist(i,e) > r_0}z(e)\\
    &< 1.
  \end{align*}

  Now, as in the proof of Theorem~\ref{thm:opinion-changes} above, if
  $J^i_t \neq 0$ then $J^i_t \geq 1$. However, since $L_T < 1$, since
  $L_t-L_{t-\Delta} = -J_t$, and since $L_t$ is non-negative, it
  follows that $J^i_t=0$ for all $t > T$, and so, in particular,
  $\op^i_{t+\Delta}=\op^i_{t-\Delta}$ for all $t > T$. This completes
  the proof for the asynchronous model. In the synchronous model,
  $\op^i_T=\op^i_{T+1}$, since all of $i$'s neighbors also have
  $\op^j_T=s$, and so it follows that $\op^i_T=s$ for all $t>T$.
\end{proof}

\subsection{Invariant random subgraphs}

\subsubsection{Light cones}
We commence by defining for each vertex $i$ and time $t$ the set of
vertices $\cc^i_t$ that form $i$'s {\em past light cone} at time
$t$. This is the set of vertices whose initial opinions may have
influenced $i$'s opinion at time $t$. Formally, set $\cc^i_0 = \{i\}$
for all $i \in V$. At every time $t$ in which $i$ updates its opinion,
update $\cc^i_t$ by
\begin{align*}
  \cc^i_t = \bigcup_{j \in \neigh{i}}\cc^j_{t-\Delta}.
\end{align*}
In the synchronous model, $\cc^i_t$ is simply the ball of radius $t$
around $i$. In the asynchronous model every vertex is a member of
$\cc^i_t$ with positive probability. However $\cc^i_t$ is still finite
with probability one.
\begin{claim}
  \label{clm:asynch-well-defined}
  Let $G$ be a $d$-bounded degree graph. Then for all $i \in V$ and $t
  \geq 0$ it holds that $\P{|\cc^i_t| < \infty} = 1$.
\end{claim}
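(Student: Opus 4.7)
The plan is to dominate $|\cc^i_t|$ by a sum over walks in $G$ starting at $i$, weighted by the probability that an appropriate sequence of Poisson clock rings occurs.

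First I will unroll the recursive definition. By induction on the number of update events in $[0,t]$, a vertex $v$ belongs to $\cc^i_t$ if and only if there exists a walk $i = v_0, v_1, \ldots, v_k = v$ in $G$ (with $(v_{j-1}, v_j) \in E$) and a decreasing sequence of times $t \geq s_1 > s_2 > \cdots > s_k \geq 0$ such that for each $j \in \{1,\ldots,k\}$, the vertex $v_{j-1}$ has a clock ring at time $s_j$. (The case $k = 0$ corresponds to $v = i$ with no update of $i$ in $[0, t]$.) This follows by iterating the rule $\cc^i_t = \bigcup_{j \in \neigh{i}}\cc^j_{t-\Delta}$ at each update time.

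Consequently $|\cc^i_t|$ is bounded by the total number of walks from $i$ of all lengths that admit such a decreasing sequence of clock rings along the walk. Taking expectations, and noting that the number of walks of length $k$ starting at $i$ is at most $d^k$ (since $G$ is $d$-bounded degree), I get
\begin{align*}
  \E{|\cc^i_t|} \leq \sum_{k=0}^\infty d^k \cdot \sup_{\text{walks } v_0,\ldots,v_{k-1}} \P{\exists\ t \geq s_1 > \cdots > s_k \geq 0 : v_{j-1} \text{ rings at } s_j}.
\end{align*}
For any fixed sequence of vertices $v_0, \ldots, v_{k-1}$ (not necessarily distinct), the probability above is bounded by the expected number of such ordered tuples. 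Since each agent has an independent unit-rate Poisson clock, and since for a unit-rate Poisson process the intensity measure on $[0,t]$ is just Lebesgue, this expected number is
\begin{align*}
  \int_{t \geq s_1 > \cdots > s_k \geq 0} ds_1 \cdots ds_k = \frac{t^k}{k!}.
\end{align*}
(The case of repeated vertices in the walk only strengthens the bound: even when several of the $v_{j-1}$ coincide we still seek ordered events of a single Poisson process, and almost surely no two events of a Poisson process coincide, so the expected number of ordered $k$-tuples of distinct events remains $t^k/k!$.)

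Putting these together yields $\E{|\cc^i_t|} \leq \sum_{k \geq 0} \frac{(dt)^k}{k!} = e^{dt} < \infty$, and hence $|\cc^i_t| < \infty$ almost surely. The main technical point is the careful unrolling of the recursion into the walk-with-decreasing-times characterization; after that, the probabilistic bound is a direct Poisson/branching calculation. Everything else is routine.
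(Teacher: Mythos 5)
Your proof is correct, but it takes a genuinely different route from the paper's. The paper stochastically dominates $|\cc^i_t|$ by the total progeny of a Galton--Watson process in which each individual has $d$ children with probability $1-e^{-t}$ (the chance of a clock ring in $[0,t]$); this process is subcritical only for small $t$, so the paper first obtains almost-sure finiteness for $t$ of order $1/d$ and then extends to all $t$ by a time-shift/restart argument (an infinite descending chain of update times may be assumed to accumulate at $0$, hence would already appear in a short window). You instead unroll the recursion into walks carrying strictly decreasing ring times and run a first-moment computation: the expected number of (walk, time-tuple) pairs of length $k$ is at most $d^k\,t^k/k!$, giving $\E{|\cc^i_t|}\le e^{dt}$ for every $t$ in a single step, with no restart argument and with an explicit quantitative bound that the paper's proof does not yield; your handling of repeated vertices via the factorial moment measure of a Poisson process is also correct. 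Two minor points worth tightening: the ``if'' direction of your walk characterization is not true in general (once $v$ itself updates, its own light cone need not contain $v$), but only the ``only if'' direction is needed for the upper bound; and it is worth remarking that the a.s.\ finiteness of the number of admitting walks also shows that the recursion defining $\cc^i_t$ is a.s.\ well-founded, since an infinite descending chain of update times would produce admitting walks of every length.
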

It follows that the asynchronous model is well defined, since if there
exists an infinite sequence of times $t_1 > t_2 > \cdots > 0$ such
that $i$ updates at time $t_i$ and $i \in \neigh{i+1}$, then
$|\cc^1_{t_1}| = \infty$.
\begin{proof}
  Note that $|\cc^i_t|$ is stochastically dominated from above by the
  total number of offsprings in a Galton-Watson process, where each
  vertex has $d$ children with probability equal to the probability of
  a clock ring in $[0,t]$, and zero children otherwise. Since, for $t$
  small enough, the expected number of children in this process is
  lower than one, it follows that the total number of offsprings is
  almost surely finite. Hence for $t$ small enough (e.g., $1/(100d)$)
  we have that $\P{|\cc^i_t|=\infty}=0$.

  Now, the event that $|\cc^1_t| = \infty$ is equivalent to the
  existence infinite sequence of times $t_1 > t_2 > \cdots > 0$ such
  that (after an appropriate renaming of the vertices) $i$ updates at
  time $t_i$ and $i \in \neigh{i+1}$. By the time-shift-invariance of
  the clock tick process we can assume that $\lim_it_t=0$, and so, if
  $\P{|\cc^1_t| = \infty} > 0$ for some $t$ then
  $\P{|\cc^j_{1/(100d)}| = \infty} > 0$ for some $j$. But this is
  false by the above, and so $\P{|\cc^i_t| = \infty} = 0$ for all $i$
  and all $t$.
\end{proof}

We say that vertices $i$ and $j$ are {\em causally connected} at time
$t$ if $\cc^i_t \cap \cc^j_t \neq \emptyset$. The following is immediate.
\begin{claim}
  \label{clm:independent}
  Condition on the event that the clock ring times are such that $i$
  and $j$ are not causally connected at time $t$. Then the events
  `$\op^i_t=S$' and `$\op^j_t=S$' are independent.
\end{claim}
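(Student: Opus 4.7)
The plan is to combine a conditional-independence argument given $S$ with a global sign-flip symmetry that removes the conditioning.

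First, I would enlarge the conditioning to include the entire realization of clock-ring times (inside the given event), so that $\cc^i_t$ and $\cc^j_t$ become deterministic disjoint subsets of $V$. Under this refined conditioning, $\op^i_t$ is a deterministic function of $\{\op^k_0 : k \in \cc^i_t\}$ and $\op^j_t$ is a deterministic function of $\{\op^k_0 : k \in \cc^j_t\}$. Since the two families of initial opinions are disjoint and the initial opinions are i.i.d.\ conditional on $S$, these two families are independent given $S$, and hence so are $\op^i_t$ and $\op^j_t$.

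Second, I would invoke the global sign-flip symmetry of the model: the joint law of $(S, (\op^k_0)_{k \in V})$ is invariant under the map $(s, (x_k)) \mapsto (-s, (-x_k))$, because $\P{S=+1} = \P{S=-1} = \half$ and because, given $S$, each $\op^k_0$ equals $S$ with the same probability $p$ regardless of the sign of $S$; majority dynamics clearly commutes with negating all opinions simultaneously. This yields $\P{\op^i_t = S \mid S = +1} = \P{\op^i_t = S \mid S = -1}$, so the event $\{\op^i_t = S\}$ is independent of $S$, and likewise for $j$. Writing $\alpha_i$ and $\alpha_j$ for the common conditional probabilities, the conditional independence from the previous paragraph gives $\P{\op^i_t = S, \op^j_t = S \mid S} = \alpha_i \alpha_j$ pointwise, and averaging over $S$ yields $\P{\op^i_t = S, \op^j_t = S} = \alpha_i \alpha_j = \P{\op^i_t = S}\,\P{\op^j_t = S}$, which is the desired independence.

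I do not foresee a substantial obstacle. The one step that requires some care is the measure-theoretic reduction to deterministic disjoint light cones by conditioning on the $\sigma$-algebra generated by all clock-ring times (which is legitimate because the event ``$i$ and $j$ are not causally connected at time $t$'' lies in this $\sigma$-algebra); after that reduction, the rest is a clean combination of conditional independence and the sign-flip symmetry.
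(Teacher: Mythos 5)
Your two ideas --- disjoint light cones give conditional independence of $\op^i_t$ and $\op^j_t$ given $S$, and the global sign-flip symmetry makes each event $\{\op^i_t=S\}$ independent of $S$ --- are exactly what the paper has in mind (it offers no proof, calling the claim immediate), and in the synchronous model, where the light cones are deterministic balls of radius $t$, your argument is complete and correct. In the asynchronous model, however, the step you yourself flag as ``requiring some care'' is where the argument breaks. Conditioning on the full clock realization $\bar T$ correctly gives, for each $\bar T$ in the event $E$,
\begin{align*}
\CondP{\op^i_t=S,\,\op^j_t=S}{\bar T}=\alpha_i(\bar T)\,\alpha_j(\bar T),
\qquad \alpha_i(\bar T):=\CondP{\op^i_t=S}{\bar T},
\end{align*}
where the sign-flip symmetry removes the dependence on $S$ but \emph{not} the dependence on $\bar T$. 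Your final paragraph then treats $\alpha_i,\alpha_j$ as constants; they are not. Averaging over $\bar T\in E$ produces $\CondE{\alpha_i\alpha_j}{E}$ on one side and $\CondE{\alpha_i}{E}\CondE{\alpha_j}{E}$ on the other, and these coincide only if $\alpha_i(\bar T)$ and $\alpha_j(\bar T)$ are uncorrelated under $\CondP{\cdot}{E}$. The fact that $E$ lies in the clock $\sigma$-algebra makes the refinement of the conditioning legitimate, but it does not supply this decorrelation.

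The decorrelation can genuinely fail. Take $i,j$ at distance two with a common neighbour $k$: since $k$ enters $\cc^i_t$ as soon as $i$ updates and enters $\cc^j_t$ as soon as $j$ updates, on $E$ at most one of $i,j$ updates by time $t$, so $(\alpha_i-p)(\alpha_j-p)=0$ on $E$; yet $\alpha_i\geq p$ always (Claim~\ref{clm:monotone-learning}), with strict inequality on a positive-probability part of $E$ (e.g.\ when $i$ alone rings and takes a fresh majority of $d\geq 3$ biased bits), so $\CondE{\alpha_i-p}{E}>0$ and likewise for $j$. The correlation is then strictly negative and the two events are not independent given $E$. What your argument does establish --- and what suffices for the downstream use in Claim~\ref{clm:cesaro}, where one can apply the Chernoff bound conditionally on each fixed $\bar T\in E$ via Claim~\ref{clm:monotone-learning} and then average the resulting bound over $E$ --- is the per-realization statement: for every fixed clock realization with $\cc^i_t\cap\cc^j_t=\emptyset$, the events are conditionally independent. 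You should state and use that version, rather than claiming independence conditional on the event itself.
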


\subsubsection{Choosing independent vertices}
\label{sec:independent-vertices}
We next proceed to construct, for each time $t$, a set of vertices
whose opinions at time $t$ are close to being independent, conditioned
on $S$.

Let $G=(V,E)$ be an infinite quasi-transitive graph.  Then it is
standard to show that given $\delta > 0$ and a positive integer $t$,
one may find a number $r_{t,\delta}$ such that, if $i,j$ are two
vertices whose distance is more than $r_{t,\delta}$, then the
probability that $i$ and $j$ are causally connected at time $t$
(meaning $\cc_t^i \cap \cc_t^j \neq \emptyset$) is at most
$\delta$. Note that in the synchronous case one can take
$r_{t,\delta}=r_{t,0}=2t$.

For every time $t$ and $\delta>0$, let $W_{t,\delta} \subseteq V$ be a
random subset of $V$, drawn as follows. Associate to each vertex in $i
\in V$ an i.i.d.\ exponential random variable $X_i$.  A vertex $i$
belongs to $W_{t,\delta}$ if and only if $X_i < X_j$ for all $j$ with
$\dist(i,j) \leq r_{t,\delta}$.

We note a number of easily verifiable facts regarding $W_{t,\delta}$.
\begin{enumerate}
\item Any two elements of $W_{t,\delta}$ are at
  distance more than $r_{t,\delta}$. Hence, by the definition of causal
  connectedness, any two vertices in $W_{t,\delta}$ are causally
  connected with probability at most $\delta$.

\item For all $h \in \aut(G)$ it holds that $h(W_{t,\delta})$ has the same
  law as $W_{t,\delta}$; that is, the distribution of $W_{t,\delta}$ is
  $\aut(G)$-invariant.

\item $W_{t,\delta}$ is almost surely infinite, since in
  quasi-transitive graphs the size of radius $r$ balls is uniformly
  bounded.
  
  If furthermore $H \leq \aut(G)$ acts quasi-transitively on $G$ then
  $W_{t,\delta}$ intersects each orbit $V/H$ infinitely often.
\end{enumerate}

\subsubsection{Convergence and uniform convergence to $\limop$}
Let $G=(V,E)$ be a slow-growth graph. Then, as we show above, the
limit $\limop^i = \lim_t\op^i_{2t}$ exists almost surely. Hence for
each vertex $i$ the function $q_i$ given by $q_i(2t) = \P{\limop^i
  \neq \op^i_{2t}}$ converges to zero.

Clearly, if $i$ and $j$ are both in the same orbit $V / \aut(G)$, then
$q_i=q_j$. It follows that for every quasi-transitive graph, that is,
for every graph with a finite number of orbits, there exists a single
function $q$ that converges to zero and such that $q(2t) \geq
\P{\limop^i \neq \op^i_{2t}}$ for all $i \in V$. In this case we say
that we have uniform convergence to $\limop^i$. The next claim
states that the same holds on a shift-invariant random graph chosen
from a quasi-transitive graph.
\begin{claim}
  \label{clm:uniform-convergence}
  Let $G_0$ be a quasi-transitive infinite graph with maximal degree
  $d$ such that $\moment{d}{G_0,i} < \infty$ for some
  ($\leftrightarrow$ all) $i \in V_0$.

  Let $H \leq \aut(G_0)$ act quasi-transitively on $G_0$, and let $G$
  be an infinite random subgraph of $G_0$ with an $H$-invariant
  distribution. Then there exists a function $q : 2\N \to [0,1]$ with
  $\lim_nq(2n) = 0$ such that $q(2t) \geq \CondP{\limop^i \neq
    \op^i_{2t}}{i \in G}$.
\end{claim}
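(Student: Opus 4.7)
The plan is to exploit the $H$-invariance of $G$ together with the quasi-transitivity of the $H$-action on $G_0$ to reduce to finitely many representative vertices, one per $H$-orbit, and then to apply Theorem~\ref{thm:moran} at each representative to obtain pointwise convergence to zero. Taking a maximum over the finitely many orbits then produces the desired uniform bound $q$.

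First I would verify that Theorem~\ref{thm:moran} is available for the random graph $G$ itself. Since $G \subseteq G_0$, we have $n_r(G,i) \leq n_r(G_0,i)$ for every vertex, and the maximal degree of $G$ is at most $d$; hence $\moment{d}{G,i} \leq \moment{d}{G_0,i} < \infty$ almost surely. Theorem~\ref{thm:moran} therefore applies to $G$, giving $\op^i_{2t} \to \limop^i$ almost surely on the event $\{i \in G\}$. Since $|\op^i_{2t}-\limop^i| \leq 2$, dominated convergence yields $\P{\limop^i \neq \op^i_{2t},\ i \in G} \to 0$, and consequently
\[
\CondP{\limop^i \neq \op^i_{2t}}{i \in G} \longrightarrow 0 \quad \text{as } t \to \infty
\]
whenever $\P{i \in G} > 0$.

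Next, let $V_0 / H = \{O_1,\ldots,O_k\}$ be the finite set of $H$-orbits. The joint distribution of $G$ together with the i.i.d.\ Poisson clocks and the conditionally i.i.d.\ initial opinions is $H$-invariant, so for any $h \in H$ the pushforward under $h$ sends this triple to an identically distributed one. Because the update rule and the random variables $\limop^i$ are measurable functions of these data, the quantity $\CondP{\limop^i \neq \op^i_{2t}}{i \in G}$ depends on $i$ only through its orbit. Denoting its common value on $O_\ell$ by $q_\ell(2t)$, we set $q(2t) = \max_{1 \leq \ell \leq k} q_\ell(2t)$. By construction $q(2t) \geq \CondP{\limop^i \neq \op^i_{2t}}{i \in G}$ for every $i$ with $\P{i \in G}>0$, and since each of the finitely many functions $q_\ell$ tends to zero, so does $q$.

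The one step that needs a little care is the transfer of $H$-invariance from $G$ to the full joint law of the dynamics. This is what justifies the orbit-constancy of the conditional probabilities, and it rests on the fact that majority dynamics is a deterministic measurable function of the graph structure and the independent, already $H$-invariantly distributed auxiliary randomness (clocks and initial signals). Once this is written out, the rest of the argument is essentially a finite maximum over already-convergent sequences.
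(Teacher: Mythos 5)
Your proof is correct and follows essentially the same route as the paper: use the $H$-invariance of the joint law to show the conditional probability $\CondP{\limop^i \neq \op^i_{2t}}{i \in G}$ is constant on each of the finitely many $H$-orbits, invoke the almost-sure convergence guaranteed by Theorem~\ref{thm:moran} (which applies to $G$ since $\moment{d}{G,i} \leq \moment{d}{G_0,i}$) to get each orbit's function tending to zero, and take the maximum over orbit representatives. The only cosmetic difference is that the paper phrases the orbit-constancy step as an explicit coupling via the automorphism $h$, whereas you phrase it as equality of pushforward laws; these are the same argument.
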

\begin{proof}
  Let $i,j \in V_0$ belong to the same orbit in $V/H$, so that there
  exists an $h \in H$ such that $h(i) = j$.

  Since $G$ is $h$-invariant, it is possible, using $h$, to couple two
  copies of our probability space in such a way that $\op^i_t$ in the
  first copy equals $\op^j_t$ in the second for all $t$, $\limop^i$
  in the first copy equals $\limop^j$ in the second, and furthermore
  $i \in G$ in the first copy iff $j \in G$ in the second. It follows
  that
  \begin{align*}
    \CondP{\limop^i \neq \op^i_{2t}}{i \in G} = \CondP{\limop^j
      \neq \op^j_{2t}}{j \in G}.
  \end{align*}
  Let $(i_1,\ldots,i_k)$ be representatives of the orbits
  $V/H$. Then
  \begin{align*}
    q(2t) = \max_k\{\CondP{\limop^{i_k} \neq \op^{i_k}_{2t}}{i_k \in
      G}\}
  \end{align*}
  satisfies the conditions of the claim.
\end{proof}

\subsubsection{Personal estimates of $S$}
Before proceeding to show that $S$ can be well estimated given
$\{\limop^i\}_{i \in V}$, we note that each agent's opinion is always
equal to $S$ with probability at least $p$. This is true, by
definition, at time $t=0$, and it may not be surprising that this is
also the case in later times. However, the proof of this fact is not
completely straightforward.

We show in Claim~\ref{clm:asynch-well-defined} above that in the
asynchronous case, with probability one there does not exist an
infinite sequence of times $t_1 > t_2 > \cdots > 0$ such that $i$
updates at time $t_i$ and $i \in \neigh{i+1}$. Let $\bar{T}$ denote an
arbitrary choice of clock ring times for which indeed such a chain
does not exist. In the synchronous case let $\bar{T}$ be a trivial
(probability one) event.
\begin{claim}
  \label{clm:monotone-learning}
  For any $\bar{T}$, agent $i$ and time $t$ it holds that
  \begin{align*}
    \CondP{\op^i_t=S}{\bar{T}} \geq p.
  \end{align*}
\end{claim}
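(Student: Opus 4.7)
The plan is to reduce the claim to an abstract inequality about monotone, odd Boolean functions via the $\mathbb{Z}/2$-symmetry of the model, and then to prove that inequality with a monotone coupling.

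By the symmetry of both the majority update rule and the conditional law of initial opinions given $S$ under the global flip $\sigma \mapsto -\sigma$ (coupled with $S \mapsto -S$), one has $\CondP{\op^i_t = -1}{\bar{T}, S = -1} = \CondP{\op^i_t = +1}{\bar{T}, S = +1}$, and averaging over $S$ shows both sides equal $\CondP{\op^i_t = S}{\bar{T}}$. Hence it suffices to prove $\CondP{\op^i_t = +1}{\bar{T}, S = +1} \geq p$.

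Condition on $\bar{T}$ and $S = +1$. The initial opinions $\sigma_j := \op^j_0$ are i.i.d.\ with $\P{\sigma_j = +1} = p$, and by Claim~\ref{clm:asynch-well-defined} the light cone $\cc^i_t$ is almost surely finite, so $\op^i_t = F(\sigma)$ for some deterministic $F : \{-1,+1\}^{\cc^i_t} \to \{-1,+1\}$. An easy induction on the update events encoded in $\bar{T}$ shows that $F$ is coordinatewise monotone (each majority update is monotone, and compositions preserve monotonicity) and odd, $F(-\sigma) = -F(\sigma)$ (each majority update is sign-equivariant). The claim thus reduces to: for every monotone, odd $F : \{-1,+1\}^n \to \{-1,+1\}$ and every $p \in [\tfrac12, 1]$, $\P{F(\sigma) = +1} \geq p$ when $\sigma$ is i.i.d.\ $p$-biased.

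To prove this, couple $\sigma$ with an independent uniform $\tau$ and an independent $\mathrm{Bernoulli}(2p-1)$ sequence $\eta$: set $\sigma_i = +1$ if $\eta_i = 1$ and $\sigma_i = \tau_i$ otherwise. Then $\sigma$ is $p$-biased, $\sigma \geq \tau$ pointwise, and by monotonicity $F(\sigma) \geq F(\tau)$, so $F(\sigma) - F(\tau) \in \{0, 2\}$. Since $\tau$ is uniform and $F$ is odd, $\E{F(\tau)} = 0$, and therefore $\E{F(\sigma)} = 2\,\P{F(\sigma) = +1,\ F(\tau) = -1}$. One then shows $\P{F(\sigma) = +1,\ F(\tau) = -1} \geq (2p-1)/2$ by pairing each $\tau$ with $-\tau$---invoking self-duality, $F(-\tau) = -F(\tau)$, so exactly one of each antipodal pair has $F = -1$---and applying monotonicity to track the effect of the random ``upgrade set'' $A = \{i : \eta_i = 1\}$ on $F$. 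Rearranging yields $\E{F(\sigma)} \geq 2p - 1$, equivalent to $\P{F(\sigma) = +1} \geq p$.

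The main obstacle is the pivotality bound $\P{F(\sigma) = +1,\ F(\tau) = -1} \geq (2p-1)/2$. While easy to state, its proof requires a careful combinatorial argument that combines self-duality (to pair antipodal inputs) with monotonicity (to control how upgrading random coordinates to $+1$ affects $F$). A potential alternative is to induct on a majority-circuit decomposition of $F$, but this requires the additional non-trivial structural fact that monotone self-dual Boolean functions admit such a decomposition.
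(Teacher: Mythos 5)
Your reduction is exactly the one the paper uses: condition on $\bar{T}$ so that $\op^i_t$ becomes a deterministic function $F$ of the finitely many initial opinions in the light cone $\cc^i_t$ (finite by Claim~\ref{clm:asynch-well-defined}), observe that $F$ is monotone and odd, and use the global spin-flip symmetry to reduce to showing $\CondP{F(\sigma)=+1}{S=+1}\geq p$ for i.i.d.\ $p$-biased inputs. At that point the paper stops and invokes Lemma~6.1 of \cite{mossel2012majority}, which is precisely the statement that monotone, odd Boolean functions satisfy $\P{F(\sigma)=+1}\geq p$.

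The gap is in your attempt to prove that lemma. After the (correct) monotone coupling $\sigma=\tau\vee A$ and the identity $\E{F(\sigma)}=2\,\P{F(\sigma)=+1,\ F(\tau)=-1}$, the inequality $\P{F(\sigma)=+1,\ F(\tau)=-1}\geq p-\tfrac12$ that you label ``the main obstacle'' is not a stepping stone toward the result --- it \emph{is} the result: it is algebraically equivalent to $\P{F(\sigma)=+1}\geq p$. So the proposal reduces the claim to an equivalent restatement and then gestures at ``pairing $\tau$ with $-\tau$ and applying monotonicity.'' That gesture does not close the argument. The natural pointwise version of it --- that for each fixed $\tau$ with $F(\tau)=-1$ the random upgrade set $A$ flips $F$ with probability at least $2p-1$ --- is false: take $F=\mathrm{maj}_3$ and $\tau=(-1,-1,-1)$; flipping requires at least two of the three coordinates to lie in $A$, which for $q=2p-1$ has probability $q^2(3-2q)<q$ whenever $q<\tfrac12$. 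The inequality holds only after averaging over the conditional law of $\tau$ given $F(\tau)=-1$, and that averaging is where all the work lies (note also that $-\tau$ is not obtained from $\tau$ by any upgrade, so the antipodal pairing does not interact cleanly with the coupling). Either supply that argument in full or simply cite Lemma~6.1 of \cite{mossel2012majority}, as the paper does; as written, the crux of the claim is asserted rather than proved.
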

\begin{proof}
  Conditioned on $\bar{T}$, $\cc^i_t$ is fixed and so $\op^i_t$ is a
  deterministic function of $\{\op^j_0\,:\,j \in \cc^i_t\}$. Let $k =
  |\cc^i_t|$, and denote this function by $f:\{-1,+1\}^k \to
  \{-1,+1\}$.

  Clearly, $f$ is monotone, in the sense that
  \begin{align*}
    f(x_1,\ldots,x_{i-1},x_i,x_{i+1},\ldots,x_k) \leq
    f(x_1,\ldots,x_{i-1},+1,x_{i+1},\ldots,x_k),
  \end{align*}
  and is symmetric in the sense that
  \begin{align*}
    f(-x_1,-x_2,\ldots,-x_k) = -f(x_1,x_2,\ldots,x_k).
  \end{align*}
  It then follows from Lemma 6.1 in~\cite{mossel2012majority} that
  \begin{align*}
    \CondP{\op^i_t = +1}{\bar{T},S=+1} \geq p,
  \end{align*}
  and so, unconditioned on $S$, it holds that
  \begin{align*}
    \CondP{\op^i_t = S}{\bar{T}} \geq p.
  \end{align*}
\end{proof}

\subsubsection{Estimating $S$}

As in the setting of Theorem~\ref{thm:percolation-learning}, let $G_0$
be a quasi-transitive infinite graph with maximal degree $d$ such that
$\moment{d}{G_0,i} < \infty$ for some ($\leftrightarrow$ all) $i \in
V_0$. Pick the sets $W_{t,\delta}$ from $V_0$, as described above in
Section~\ref{sec:independent-vertices}.

Let $H \leq \aut(G_0)$ act transitively on $G_0$, and let $G=(V,E)$ be
an infinite connected random subgraph of $G_0$ with an $H$-invariant
distribution.  Let $V_{t,\delta} = W_{t,\delta} \cap V$. Note that
$W_{t,\delta}$ and $V$ are independent and are both infinite and
$H$-invariant. Furthermore, as we note above, $W_{t,\delta}$
intersects every orbit $V/ \aut(G_0)$ infinitely. It follows that
$V_{t,\delta}$ is almost surely infinite.
\begin{claim}
  \label{clm:cesaro}
  Fix an enumeration of the vertices of $G_0$, let
  $\{i_1,i_2,\ldots\}$ be the induced enumeration of the vertices in
  $V_{t,\delta}$, and let
  \begin{align*}
    \Sest_{t,\delta}^n = \sgn\frac{1}{n}\sum_{k=1}^n\op^{i_k}_t.
  \end{align*}
  Then
  \begin{align*}
    \P{\Sest_{t,\delta}^n \neq S} \leq e^{-\frac{(p-1/2)^2}{2p}n} + n^2\delta.
  \end{align*}
\end{claim}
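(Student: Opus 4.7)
The plan is to decompose according to whether or not the past light cones $\cc^{i_1}_t, \ldots, \cc^{i_n}_t$ are pairwise disjoint. Let $E$ be the event that no two of $i_1, \ldots, i_n$ are causally connected at time $t$. Since all these vertices lie in $W_{t,\delta}$, any two are at graph distance more than $r_{t,\delta}$, and so the probability that any given pair has intersecting cones is at most $\delta$ by the choice of $r_{t,\delta}$. A union bound over the $\binom{n}{2} \leq n^2$ pairs (conditional, if need be, on $G$, $W_{t,\delta}$ and the enumeration, none of which affect the bound) yields $\P{E^c} \leq n^2 \delta$.

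On $E$ and conditional on the clock-ring times $\bar{T}$ together with $G$, $W_{t,\delta}$ and the enumeration, each $\op^{i_k}_t$ is a deterministic function of the initial opinions $\{\op^j_0 : j \in \cc^{i_k}_t\}$, and these cones are pairwise disjoint. Since the initial opinions are i.i.d.\ conditional on $S$, the variables $\op^{i_1}_t, \ldots, \op^{i_n}_t$ are conditionally independent given $S$. Moreover Claim~\ref{clm:monotone-learning}, which is unaffected by the extra conditioning on $G$ and $W_{t,\delta}$ since those are independent of $S$ and of the initial opinions, provides the uniform lower bound $\CondP{\op^{i_k}_t = S}{\bar{T}, G, W_{t,\delta}} \geq p$ for each $k$.

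From here the claim follows from a one-sided Chernoff bound. Condition on $S=+1$ together with everything above, and set $Y_k = (1+\op^{i_k}_t)/2 \in \{0,1\}$, so that $\Sest^n_{t,\delta} = -1$ iff $\sum_k Y_k \leq n/2$. The $Y_k$ are independent Bernoulli variables each with parameter $\geq p > 1/2$, and hence stochastically dominate i.i.d.\ Bernoulli$(p)$ variables $Y'_k$. Applying the multiplicative Chernoff bound $\P{\sum Y'_k \leq (1-\eps) pn} \leq e^{-\eps^2 pn/2}$ with $\eps = (p-1/2)/p$ gives exactly the target exponential $e^{-(p-1/2)^2 n / (2p)}$. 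By the $S \leftrightarrow -S$ symmetry the same bound holds on $E$ without conditioning on $S$, and combining with $\P{E^c} \leq n^2 \delta$ via $\P{A} \leq \CondP{A}{E} + \P{E^c}$ yields the stated inequality. The only delicate point is tracking what is independent of what, so that conditioning on $G$, $W_{t,\delta}$ and the enumeration of $V_{t,\delta}$ disturbs neither the Claim~\ref{clm:monotone-learning} lower bound nor the conditional independence on $E$; this is routine bookkeeping rather than a substantive obstacle.
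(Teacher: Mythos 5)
Your proposal is correct and follows essentially the same route as the paper: decompose on the event $E$ that no two of $i_1,\ldots,i_n$ are causally connected (bounded by $n^2\delta$ via a union bound over pairs at distance more than $r_{t,\delta}$), use conditional independence on $E$ together with the Claim~\ref{clm:monotone-learning} lower bound of $p$, and conclude with a Chernoff bound. Your write-up is more explicit about the conditioning on $G$, $W_{t,\delta}$ and about the exact form of the Chernoff estimate, but the argument is the same.
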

\begin{proof}
  Let $E$ be the event that the clock ring times are such that
  $\{i_1,\ldots,i_n\}$ are not causally connected at time $t$. This
  happens with probability at least $1-n^2\delta$. Then conditioned on
  $E$, by Claim~\ref{clm:independent}, the events `$\op^{i_k}_t=S$'
  are independent.
  
  Now, by Claim~\ref{clm:monotone-learning}, for all $i$ and $t$ it
  holds that $\CondP{\op^i_t=S}{E} \geq p$. Hence the claim follows by
  the Chernoff bound.
\end{proof}

We are now ready to prove Theorem~\ref{thm:percolation-learning}
\begin{proof}[Proof of Theorem~\ref{thm:percolation-learning}]
  We will prove the theorem by showing that for every $\eps>0$ there
  exists an $\Sest_\eps$ that is $\sigmainf$-measurable and such that
  \begin{align*}
    \P{\Sest_\eps \neq S} \leq \eps.
  \end{align*}

  Let $N$ and $\delta$ be such that for every $t$ it holds that
  \begin{align}
    \label{eq:low-err}
    \P{\Sest_{t,\delta}^N \neq S} \leq \eps/2,
  \end{align}
  where, as in Claim~\ref{clm:cesaro}, $\Sest_{t,\delta}^N$ is given
  by
  \begin{align*}
    \Sest_{t,\delta}^N = \sgn\frac{1}{N}\sum_{k=1}^N\op^{i_k}_t,
  \end{align*}
  with $\{i_1,i_2,\ldots\}$ an enumeration of the vertices in
  $V_{t,\delta}$ which is induced by a fixed enumeration of the
  vertices of $G_0$. The existence of such $N$ and $\delta$ satisfying
  Eq.~\ref{eq:low-err} is guaranteed by Claim~\ref{clm:cesaro}.

  Let $q$ be a function which satisfies
  \begin{align*}
    q(2t) \geq \P{\limop^i \neq \op^i_{2t}},~~ \lim_{t \to
      \infty}q(t)=0,
  \end{align*}
  as given in
  Claim~\ref{clm:uniform-convergence}. Let $T \in 2\N$ be such that
  $q(T) \leq \eps/(2N)$, and define $\Sest_\eps$ by
  \begin{align*}
    \Sest_\eps = \sgn\frac{1}{N}\sum_{k=1}^N\limop^{i_k},
  \end{align*}
  where $\{i_1,i_2,\ldots\}$ is the same enumeration used to define
  $\Sest_{T,\delta}^N$. Then
  \begin{align*}
    \P{\Sest_\eps \neq \Sest_{T,\delta}^N} \leq \P{\limop^{i_k} \neq
      \op^{i_k}_T\mbox{ for some $1 \leq k \leq N$}}.
  \end{align*}
  Now, 
  \begin{align*}
    \CondP{\limop^i \neq \op^i_T}{i \in V_{t,\delta}} =
    \P{\limop^i \neq \op^i_T} \leq q(T).
  \end{align*}
  The equality holds since the choice of $V_{t,\delta}$ is
  independent of the majority dynamics process, and the inequality is
  simply a reference to the definition of $q(\cdot)$.
  
  Hence by the union bound we have that
  \begin{align*}
    \P{\limop^{i_k} \neq \op^{i_k}_T\mbox{ for some $1 \leq k \leq N$}} \leq N
    \cdot q(T) \leq \eps / 2,
  \end{align*}
  and so
  \begin{align*}
    \P{\Sest_\eps \neq \Sest_{T,\delta}^N} \leq \eps/2.
  \end{align*}
  Combining this with Eq.~\ref{eq:low-err}, we have that
  \begin{align*}
    \P{\Sest_\eps \neq S} \leq \eps.
  \end{align*}

\end{proof}

\subsection{Families of uniformly bounded growth graphs}
Recall that given $d \geq 3$ and a function $f : \N \to \N$,
$\bdd(f,d)$ is the family of $d$-bounded degree graphs $G$ such that
$n_r(G,i) \leq f(r)$ for all vertices $i$ in $G$ and $r \in
\N$. Recall also that we say that $f : \N \to \N$ has {\em slow
  growth} if $\moment{d}{f} < \infty$.

Fix $d \geq 3$, and let $f$ have slow growth. Let $r_0$ be the
smallest integer for which it holds that
\begin{align*}
  \frac{d+1}{d-1} \cdot 2d \cdot \sum_{r >
    r_0}\left(\frac{d-1}{d+1}\right)^r \cdot f(r) < 1.
\end{align*}
That is, $r_0$ is the minimal number that satisfies the condition of
Theorem~\ref{thm:bunkers}.  Let $B_0(G,i)$ denote the set of vertices
at distance at most $r_0+2$ from $i$ in $G$, and note that
$|B_0(G,i)|$ is uniformly bounded for all graphs $G \in \bdd(f,d)$.

\begin{proof}[Proof of Theorem~\ref{thm:bdd-retention}]
  Denote by $U_i$ the event that $\op^j_0=S$ for all $j \in
  B_0(G,i)$. Note that by Theorem~\ref{thm:bunkers}, the event
  `$\limop^i=S$' contains $U_i$.

  Let $p_0 < 1$ be close enough to $1$ so that, for any vertex $i$ in
  any graph $G \in \bdd(f,d)$, and any $p > p_0$, it holds that
  \begin{align*}
    \P{U_i} \geq \eta > \half
  \end{align*}
  for some $\eta$.  This holds, for example, for $p_0 > 1-1/(2N)$,
  where $N$ is a uniform bound on $|B_0(G,i)|$.

  Given a graph $G \in \bdd(f,d)$, let $I_G \subset V$ be a maximal
  set of vertices such that $B_0(G,i) \cap B_0(G,j) = \emptyset$ for
  all $i,j \in I_G$. Then the events $\{U_i\}_{i \in I_G}$ are
  independent, since the events `$\op^i_0=S$' are independent.

  We first consider the case of finite graphs. Let $\{G_n\}_{n \in
    \N}$ in $\bdd(f,d)$ be a sequence of graphs such that $\lim_n|V_n|
  = \infty$, and let $I_n = I_{G_n}$. Note that
  $\lim_{n\to\infty}|I_n| = \infty$, since the diameters of $G_n$ tend
  to infinity.

  Since $\P{U_i} \geq \eta$, and since the events $U_i$ are independent,
  then by the Chernoff bound we have that
  \begin{align*}
    \P{\sum_{i \in I_n}\ind{U_i} < |I_n|/2} \leq e^{-\frac{(\eta-1/2)^2}{2\eta}|I_n|},
  \end{align*}
  and so, since the event `$\limop^i=S$' contains $U_i$, we have that
  \begin{align*}
    \P{\Sest_n \neq S} \leq e^{-\frac{(\eta-1/2)^2}{2\eta}|I_n|},
  \end{align*}
  for
  \begin{align*}
    \Sest_n = \sgn\sum_{i \in I_n}\limop^i.
  \end{align*}
  Since $\lim_{n \to \infty}|I_n| = \infty$ it follows that
  \begin{align*}
    \lim_n\delta(G_n,p) = 0,
  \end{align*}
  and we have proved the claim for finite graphs.

  Let $G=(V,E) \in \bdd(f,d)$ be an infinite graph. Then, since the
  diameter of $G$ is infinite, there exists for every $n$ a set $I_n
  \subset V$ of size $n$ such that $B_0(G,i) \cap B_0(G,j) =
  \emptyset$ for all $i,j \in I_n$. By the same reasoning as above, we
  have that
  \begin{align*}
    \P{\Sest_n \neq S} \leq e^{-\frac{(\eta-1/2)^2}{2\eta}n},
  \end{align*}
  for
  \begin{align*}
    \Sest_n = \sgn\sum_{i \in I_n}\limop^i,
  \end{align*}
  and so
  \begin{align*}
    \delta(G,p) = 0.
  \end{align*}
\end{proof}

\subsection{Convergence on a non-slow growth graph}
\label{sec:counterexample}

In this section we provide an example of a $d$-regular graph for which
$\moment{d}{G,i} = \infty$ for all $i \in V$, but still the opinion of
every agent converges, in the synchronous model, to period at most
two, for every set of initial opinions.

Given an odd $d\geq 3$, denote by $H$ the complete bipartite graph
$K_{d,d}$ on $2d$ vertices, without a single edge $\{a,b\}$.  Let $G'$
be any $d-$regular graph. Let $F$ be any set of edges whose removal
separates $G'$ into finite components. Consider the following
process. For an edge $e=\{i,j\}\in F$, remove $e$ from $G'$, add a
copy $H_e$ of $H$, connect $i$ to $a_e$ (the vertex in $H_e$ which
corresponds to $a$), and connect $j$ to $b_e$. Repeat this process for
all $e\in F$. The resulting graph will be denoted by $G$.
\begin{lemma}
  For any initial choice of opinions for $G$, the opinions of all
  vertices converge, in the synchronous model, to period at most two.
\end{lemma}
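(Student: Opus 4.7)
The plan is to exploit the fact that each gadget $H_e$ stabilizes in a self-contained way, in order to decouple the global dynamics on $G$ and reduce the question to synchronous majority dynamics on a family of finite graphs, to which the classical McCulloch--Pitts theorem---the synchronous case of Theorem~\ref{thm:moran}---applies.

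First I would analyze one copy $H_e$ in isolation. Denote by $A_e$ (resp.\ $B_e$) the $d$-vertex side of its bipartition containing $a_e$ (resp.\ $b_e$), and call a vertex of $H_e$ \emph{interior} if it is neither $a_e$ nor $b_e$. Each interior vertex of $A_e$ has neighborhood exactly $B_e$, and symmetrically on the other side; hence for every $t \geq 1$ all $d-1$ interior vertices of $A_e$ share a common opinion $\alpha_t$ and all $d-1$ interior vertices of $B_e$ share a common opinion $\beta_t$. Because $d-1 \geq 2$, these $d-1$ repeated internal votes strictly dominate the single remaining vote in every update inside $H_e$; a direct calculation then gives $\op^{a_e}_t = \beta_{t-1}$ and $\op^{b_e}_t = \alpha_{t-1}$ for $t \geq 2$, along with the recursion $\alpha_t = \beta_{t-1}$, $\beta_t = \alpha_{t-1}$ for $t \geq 2$. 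Consequently every vertex of $H_e$ has period-$2$ opinions from time $2$ onward, and these opinions depend only on $\alpha_1$ and $\beta_1$, i.e.\ only on the initial configuration inside $H_e$, independently of the initial opinions of $i$ and $j$.

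Next I would fix any finite component $C$ of $G' \setminus F$ and work with the finite set $D = C \cup \bigcup_{e \in F,\, e \cap C \neq \emptyset} H_e$. Every vertex of $D$ has all its $G$-neighbors in $D$ except for one ``outer'' boundary vertex $c_e \in \{a_e, b_e\}$ of each $H_e$ whose external $G'$-endpoint lies outside $C$. I would build a finite graph $\hat D$ by removing each such outer edge $(c_e, j)$ with $j \notin D$ and replacing it by a self-loop at $c_e$, so that every vertex of $\hat D$ still has the odd degree $d$. Running synchronous majority dynamics on $\hat D$ from the same initial configuration produces a second trajectory which, by the domination argument of the previous step, coincides with the real $G$-dynamics on $D$ for every $t \geq 2$: for $t \geq 1$ the $d-1$ interior neighbors of $c_e$ already agree, so whether $c_e$'s remaining vote comes from $j$ or from $c_e$ itself via the self-loop is irrelevant to its update, and a short induction propagates the agreement to the rest of $D$.

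Finally, the modified dynamics on $\hat D$ is synchronous majority dynamics on a finite graph with odd degrees, so Theorem~\ref{thm:moran} applied to $\hat D$ (for which the growth condition holds trivially) implies that every vertex of $\hat D$ has opinion of eventual period at most $2$. Pulling this back via the coupling above yields the same conclusion for every vertex of $D$, and since every vertex of $G$ lies in some such $D$ the lemma follows. The main technical obstacle is the verification that the self-loop substitution does not disturb the dynamics from time $2$ onward: the value of $c_e$ in the real and in the modified dynamics may disagree at time $1$, and one must check that this single-step discrepancy cannot corrupt the state of any other vertex of $D$ at time $2$, which again reduces to the $d-1$ already-synchronized interior neighbors of $c_e$ dominating every update inside $H_e$.
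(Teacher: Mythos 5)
Your proof is correct and follows the same two-step strategy as the paper: first show that each gadget $H_e$ synchronizes its two interior classes by time $1$ and locks into a period-two orbit by time $2$ that is determined by the initial configuration inside $H_e$ alone (your $\alpha_t=\beta_{t-1}$, $\beta_t=\alpha_{t-1}$ recursion is exactly the paper's $m_A,m_B$ computation), and then use this decoupling to reduce to synchronous majority dynamics on finite graphs. Where you diverge is in how the finite-graph theorem is invoked: the paper simply asserts that after time $2$ each finite component evolves as a finite system ``with some fixed boundary conditions'' and cites Goles--Olivos, which is slightly loose since the gadget boundary is period-two rather than fixed and one needs a boundary-condition variant of that theorem. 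Your self-loop surgery producing the honest finite odd-degree graph $\hat D$, together with the coupling showing the two trajectories agree on $D$ for all $t\geq 2$ (justified by the $d-1\geq 2$ interior votes dominating the single severed edge), lets you apply the finite-graph result verbatim and is the more airtight way to execute that step; the price is the extra verification that the time-$1$ discrepancy at $c_e$ cannot propagate, which you correctly identify and resolve. Both arguments are sound; yours is somewhat more self-contained, the paper's is shorter.
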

Since $G'$ was arbitrary, one can easily construct such graphs $G$
with $\moment{d}{G,i} = \infty$.
\begin{proof}
  We first show that for any copy of $H$ the opinions converge at time
  at most $t=2$ to a period at most two. Fix a copy of $H$, call it
  also $H$.

  Indeed, if we call one side of $H$ $A$ and the other $B$, so that
  $a\in A,~b\in B$, then at time $t=1$ all vertices of
  $A\setminus\{a\}$ have value $m_B$, the majority of values in $B$,
  and similarly vertices in $B\setminus\{b\}$ have value $m_A$. As
  $d\geq 3$, both $a$ and $b$ have more neighbors in $H$ than in
  $G\setminus{H}$. Thus, at time $t=2$ all $A$ have opinions $m_A$,
  and all $B$ have $m_B$. Now it is easy to see that for this subgraph
  we have converged to a period of length at most two.

  Since when we remove the copies of $H$ from $G$ we get a union of
  finite components, after time $t=2$, the processes in these
  components are independent and can be thought as applying the
  dynamics to finite graphs, with some fixed boundary
  conditions. Thus, the opinions in these components will also
  converge to a period of length at most two~\cite{goles1980periodic}.

\end{proof}

\bibliography{retention}
\end{document}